\newcommand{\R}{\mathbb{R}}
\newcommand{\N}{\mathbb{N}}
\newcommand{\M}{\mathcal{M}}
\renewcommand{\P}{\mathcal P}
\newcommand{\A}{\mathcal{A}}
\newcommand{\dis}{\displaystyle}
\newcommand{\p}{\partial}
\newcommand{\f}{\frac}
\newcommand{\beq}{\begin{equation}}
\newcommand{\eeq}{\end{equation}}
\renewcommand{\leq}{\leqslant}
\renewcommand{\geq}{\geqslant}
\newcommand{\1}{{\mathchoice {\rm 1\mskip-4mu l} {\rm 1\mskip-4mu l}
{\rm 1\mskip-4.5mu l} {\rm 1\mskip-5mu l}}}
\def\d {{\mathrm d}}
\def\e{\mathrm{e}}
\DeclareMathOperator{\Ker}{Ker}
\DeclareMathOperator{\supp}{supp}
\newtheorem{theorem}{Theorem}
\newtheorem*{theorem*}{Theorem}
\newtheorem{lemma}[theorem]{Lemma}
\newtheorem{definition}[theorem]{Definition}
\newtheorem*{definition*}{Definition}
\newtheorem{proposition}[theorem]{Proposition}
\newtheorem{corollary}[theorem]{Corollary}
\newtheorem*{remark}{Remark}
\title{Measure solutions to the conservative renewal equation}
\author{Pierre Gabriel \thanks{Laboratoire de Math\'ematiques de Versailles, UVSQ, CNRS, Universit\'e Paris-Saclay,  45 Avenue des \'Etats-Unis, 78035 Versailles cedex, France; e-mail: {\tt pierre.gabriel@uvsq.fr}}}
\date{}
\begin{document}

\maketitle

\begin{abstract}
We prove the existence and uniqueness of measure solutions to the conservative renewal equation
and analyze their long time behavior.
The solutions are built by using a duality approach.
This construction is well suited to apply the Doeblin's argument which ensures the exponential relaxation of the solutions to the equilibrium.
\end{abstract}

\section*{Introduction}

We are interested in the conservative renewal equation
\beq\label{eq:renewal}\left\{\begin{array}{l}
\dis\f{\p n}{\p t}(t,a)+\f{\p n}{\p a}(t,a)+\beta(a)n(t,a)=0,\qquad t,a>0,
\vspace{2mm}\\
\dis n(t,0)=\int_0^\infty\beta(a)n(t,a)\,\d a,\qquad t>0,
\vspace{2mm}\\
n(0,a)=n^{\rm in}(a),\qquad a\geq0.
\end{array}\right.\eeq
It is a standard model of population dynamics, sometimes referred to as the McKendrick-von Foerster model.
The population is structured by an age variable $a\geq0$ which grows at the same speed as time and is reset to zero according to the rate $\beta(a).$
It is used for instance as a model of cell division, the age of the cells being the time elapsed since the mitosis of their mother.
Suppose we follow one cell in a cell line over time and whenever a division occurs we continue to follow only one of the two daughter cells.
Equation~\eqref{eq:renewal} prescribes the time evolution of the probability distribution $n(t,a)$ of the cell to be at age $a$ at time $t,$
starting with an initial probability distribution $n^{\rm in}.$
Integrating (formally) the equation with respect to age we get the conservation property
\[\frac{\d}{\d t}\int_0^\infty n(t,a)\,\d a=0\]
which ensures that, if $n^{\rm in}$ is a probability distribution ({\it i.e.} $\int_0^\infty n^{\rm in}=1$), then $n(t,\cdot)$ is a probability distribution for any time~$t\geq0.$
It is also worth noticing that Equation~\eqref{eq:renewal} admits stationary solutions which are explicitly given by
\[N(a)=N(0)\,\e^{-\int_0^a\beta(u)\d u}.\]
The problem of asymptotic behavior for Equation~\eqref{eq:renewal} consists in investigating the convergence of any solution to a stationary one when time goes to infinity.

\medskip

Age-structured models have been extendedly studied (existence of solutions and asymptotic behavior) for a long time by many authors in a $L^1$ setting
(see for instance among many others~\cite{Iannelli,Thieme,MetzDiekmann,Perthame,Webb}).
More recently measure solutions to structured population models started to draw attention
~\cite{GwiazdaWiedemann,CanizoCarrilloCuadrado,CarrilloColomboGwiazdaUlikowska,EversHilleMuntean15,GwiazdaLorenzMarciniak,DiekmannGetto}.

\medskip

The goal of the mini-course is to define measure solutions to Equation~\eqref{eq:renewal}, prove existence and uniqueness of such solutions, and demonstrate their exponential convergence to the equilibrium.
Considering measure solutions instead of $L^1$ solutions ({\it i.e.} probability density functions) presents the crucial advantage to authorize Dirac masses as initial data.
This is very important for the biological problem since it corresponds to the case when the age of the cell at the initial time is known with accuracy.

\section{Some recalls about measure theory}

We first recall some classical results about measure theory and more particularly about its functional point of view.
For more details and for proofs of the results we refer to~\cite{Rudin}.

We endow $\R_+=[0,+\infty)$ with its standard topology and the associated Borel $\sigma$-algebra.
We denote by $\M(\R_+)$ the set of finite signed Borel measures on $\R_+.$

\begin{theorem*}[Jordan decomposition]
Any $\mu\in\M(\R_+)$ admits a unique decomposition of the form $\mu=\mu_+-\mu_-$
where $\mu_+$ and $\mu_-$ are finite positive Borel measures which are mutually singular.
The positive measure $|\mu|=\mu_++\mu_-$ is called the total variation measure of the measure $\mu.$
We call total variation of $\mu$ the (finite) quantity
\[\|\mu\|_{TV}:=|\mu|(\R_+)=\mu_+(\R_+)+\mu_-(\R_+).\]
\end{theorem*}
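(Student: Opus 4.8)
The plan is to reduce the statement to the Hahn decomposition theorem, which is really the crux, and then read off the Jordan decomposition together with its uniqueness. I would first establish that there is a Borel set $P\subseteq\R_+$ which is \emph{positive} for $\mu$ (meaning $\mu(E)\geq0$ for every Borel $E\subseteq P$) and whose complement $N=\R_+\setminus P$ is \emph{negative} (meaning $\mu(E)\leq0$ for every Borel $E\subseteq N$). Granting such a pair $(P,N)$, I would simply set $\mu_+(E):=\mu(E\cap P)$ and $\mu_-(E):=-\mu(E\cap N)$ for every Borel set $E$. Countable additivity of $\mu$ makes these two set functions measures, positivity of $P$ and negativity of $N$ make them \emph{positive} measures, they are finite because $\mu$ is, they satisfy $\mu_+-\mu_-=\mu$ by additivity over the partition $\{P,N\}$, and they are mutually singular since $\mu_+$ is carried by $P$ and $\mu_-$ by $N=P^c$.

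The heart of the argument---and the step I expect to be the main obstacle---is the construction of $P$. I would proceed in two stages. First, a \emph{positive-set extraction} lemma: if $\mu(A)\geq0$ then $A$ contains a positive set $P_A$ with $\mu(P_A)\geq\mu(A)$. To prove it I would iteratively delete from $A$ subsets of strictly negative measure, at each step removing a subset whose measure is below $-1/n$ for the smallest admissible integer $n$; since $\mu$ is finite the measures of the deleted pieces are summable and hence tend to $0$, which forces the thresholds $-1/n$ to vanish and leaves a remainder that can have no subset of negative measure, i.e.\ a positive set. Second, I would take $s:=\sup\{\mu(A):A\text{ Borel}\}$, which is finite (again by finiteness of $\mu$) and nonnegative (as $\mu(\emptyset)=0$), choose sets $A_k$ with $\mu(A_k)\to s$, extract positive sets $P_k\subseteq A_k$ with $\mu(P_k)\geq\mu(A_k)$, and set $P:=\bigcup_k P_k$. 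A countable union of positive sets is positive, and $\mu(P)=s$; then $N=P^c$ must be negative, for otherwise a subset of positive measure inside $N$ could be adjoined to $P$ to exceed the supremum $s$.

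For uniqueness I would avoid comparing Hahn sets directly and instead exhibit a decomposition-free formula for the pieces. If $\mu=\nu_+-\nu_-$ is any decomposition into mutually singular positive measures, with $\nu_+$ carried by a Borel set $S$ and $\nu_-$ by $S^c$, then for every Borel $E$ and every Borel $F\subseteq E$ one has $\mu(F)=\nu_+(F)-\nu_-(F)\leq\nu_+(F)\leq\nu_+(E)$, with equality for the choice $F=E\cap S$. Hence
\[\nu_+(E)=\sup\{\mu(F):F\subseteq E\text{ Borel}\},\qquad \nu_-(E)=\sup\{-\mu(F):F\subseteq E\text{ Borel}\},\]
and the right-hand sides do not depend on the chosen decomposition. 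Applying this both to $(\nu_+,\nu_-)$ and to the pair $(\mu_+,\mu_-)$ built from the Hahn decomposition yields $\nu_+=\mu_+$ and $\nu_-=\mu_-$, which is the asserted uniqueness; the total variation $|\mu|=\mu_++\mu_-$ and the quantity $\|\mu\|_{TV}$ are then well defined and finite.
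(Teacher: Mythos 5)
The paper does not prove this statement: it is recalled as background and the proof is explicitly delegated to Rudin~\cite{Rudin}, so there is no in-paper argument to compare against. Your proof is correct and complete. It follows the classical Hahn-decomposition route: the positive-set extraction lemma (iterative deletion of sets of measure below $-1/n$ for minimal $n$, with summability of the deleted masses forcing the thresholds to vanish), the supremum argument producing a positive set $P$ with negative complement, the definitions $\mu_\pm(E)=\pm\mu(E\cap P^{\pm})$, and the decomposition-free identification $\nu_+(E)=\sup\{\mu(F):F\subseteq E\ \text{Borel}\}$ for uniqueness --- all of these steps are sound, and the uniqueness formula is a particularly clean way to avoid comparing Hahn sets up to null sets. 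For what it is worth, Rudin's own treatment takes a different route: he first defines the total variation measure by $|\mu|(E)=\sup\sum_j|\mu(E_j)|$ over countable partitions of $E$, proves it is a finite positive measure, and then sets $\mu_\pm=\tfrac12(|\mu|\pm\mu)$, deriving the Hahn decomposition afterwards; your approach (essentially Folland's or Royden's) gets mutual singularity more directly from the sets $P$ and $P^c$. One presentational point: the finiteness of $s=\sup\{\mu(A):A\ \text{Borel}\}$ is not literally immediate from the real-valuedness of $\mu$ (boundedness of the range of a finite signed measure is itself a small theorem), but your argument does not need it a priori, since the supremum is ultimately attained as $\mu(P)\in\R$; it would be worth saying so explicitly.
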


We denote by $C_b(\R_+)$ the vector space of bounded continuous functions on $\R_+.$
Endowed with the norm $\|f\|_\infty:=\sup_{x\geq0}|f(x)|$ it is a Banach space.
We also consider the closed subspace $C_0(\R_+)$ of continuous functions which tend to zero at infinity.
To any $\mu\in\M(\R_+)$ we can associate a continuous linear form $T_\mu\in C_b(\R_+)'$ defined by
\[T_\mu:f\mapsto\int_{\R_+}f\,\d\mu.\]
The continuity is ensured by the inequality
\[|T_\mu\, f|\leq\|\mu\|_{TV}\|f\|_\infty.\]
The following theorem ensures that the application $\mu\mapsto T_\mu$ is an isometry from $\M(\R_+)$ onto $C_0(\R_+)',$ where $C_0(\R_+)'$ is endowed with the dual norm
\[\|T\|_{C_0(\R_+)'}:=\sup_{\|f\|_\infty\leq1}|Tf|.\]

\begin{theorem*}[Riesz representation]
For any $T\in C_0(\R_+)'$ there exists a unique $\mu\in\M(\R_+)$ such that $T=T_\mu.$
Additionally we have $\|T\|_{C_0(\R_+)'}=\|\mu\|_{TV}.$
\end{theorem*}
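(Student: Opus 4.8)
The plan is to establish the three assertions in the order: the norm identity $\|T_\mu\|_{C_0(\R_+)'}=\|\mu\|_{TV}$, then uniqueness (which will be an immediate consequence), and finally existence, which is the genuinely hard part.

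First I would prove the isometry. The inequality $\|T_\mu\|_{C_0(\R_+)'}\leq\|\mu\|_{TV}$ is exactly the continuity estimate already displayed above, restricted to $C_0(\R_+)$. For the reverse inequality I would exploit the Jordan (and underlying Hahn) decomposition: there are disjoint Borel sets $P,N$ with $P\cup N=\R_+$ such that $\mu_+(\cdot)=\mu(\cdot\cap P)$ and $\mu_-(\cdot)=-\mu(\cdot\cap N)$. Fixing $\ep>0$, inner regularity of the finite measure $|\mu|$ provides disjoint compact sets $K_P\subset P$ and $K_N\subset N$ with $|\mu|\bigl((P\setminus K_P)\cup(N\setminus K_N)\bigr)<\ep$. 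Urysohn's lemma then yields $f\in C_0(\R_+)$ with $-1\leq f\leq1$, $f\equiv1$ on $K_P$ and $f\equiv-1$ on $K_N$, for which a direct estimate gives $T_\mu f\geq\|\mu\|_{TV}-C\ep$ for an absolute constant $C$. Letting $\ep\to0$ closes the isometry. Uniqueness is then free: if $T_\mu=T_\nu$ then $T_{\mu-\nu}=0$, so $\|\mu-\nu\|_{TV}=\|T_{\mu-\nu}\|_{C_0(\R_+)'}=0$ and $\mu=\nu$.

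The core of the theorem is surjectivity: given $T\in C_0(\R_+)'$, produce a representing measure. I would first reduce to the positive case by a lattice decomposition of the functional itself. For $f\in C_0(\R_+)$ with $f\geq0$, set $\Lambda_+f:=\sup\{Tg : g\in C_0(\R_+),\ 0\leq g\leq f\}$; the bound $|Tg|\leq\|T\|\,\|f\|_\infty$ shows this supremum is finite, and one checks that $\Lambda_+$ is positively homogeneous and additive on the nonnegative cone, hence extends to a bounded \emph{positive} linear functional on $C_0(\R_+)$ via $\Lambda_+f=\Lambda_+f^+-\Lambda_+f^-$. Then $\Lambda_-:=\Lambda_+-T$ is also positive and bounded. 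To each of the positive functionals $\Lambda_\pm$ I would apply the Riesz--Markov--Kakutani representation theorem, which yields unique finite positive (regular) Borel measures $\mu_+,\mu_-$ on $\R_+$ with $\Lambda_\pm=T_{\mu_\pm}$; setting $\mu=\mu_+-\mu_-\in\M(\R_+)$ gives $T=T_{\mu_+}-T_{\mu_-}=T_\mu$, as desired.

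The main obstacle is the Riesz--Markov step, namely the actual construction of a Borel measure from a positive linear functional. This is where one builds an outer measure by the Carath\'eodory procedure, isolates the Carath\'eodory-measurable sets, and verifies both the inner/outer regularity and the representation formula $\Lambda f=\int f\,\d\mu$; here one uses crucially that $\R_+$ is locally compact, $\sigma$-compact and Hausdorff. A secondary subtlety is the additivity of $\Lambda_+$ on the nonnegative cone, which underlies the Jordan-type splitting of the functional. I note that, because $\M(\R_+)$ is complete and $\mu\mapsto T_\mu$ is isometric, its image is automatically closed in $C_0(\R_+)'$, so surjectivity is equivalent to density of the image; however proving this density appears no easier than the direct Riesz--Markov construction, so I would carry out the latter (or invoke it, as the reference~\cite{Rudin} does).
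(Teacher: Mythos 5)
The paper states this theorem as a classical recall and offers no proof of its own, referring the reader to~\cite{Rudin}; your outline is essentially the standard argument from that reference and is correct. The isometry via the Hahn decomposition, inner regularity of the finite measure $|\mu|$ (automatic on the Polish space $\R_+$) and Urysohn functions, uniqueness as an immediate consequence, and surjectivity via the lattice splitting $T=\Lambda_+-\Lambda_-$ followed by the Riesz--Markov--Kakutani theorem for positive functionals are all sound, and deferring the Carath\'eodory construction to that cited theorem is no worse than what the paper itself does in citing Rudin for the whole statement.
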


This theorem ensures that $(\M(\R_+),\|\cdot\|_{TV})$ is a Banach space.
It also ensures the existence of an isometric inclusion $C_0(\R_+)'\subset C_b(\R_+)'.$
Notice that this inclusion is strict: there exist nontrivial continuous linear forms on $C_b(\R_+)$ which are trivial on $C_0(\R_+).$
Such a continuous linear form can be built for instance by using the Hahn-Banach theorem to extend the application which associates,
to continuous functions which have a finite limit at infinity, the value of this limit. 

More precisely the mapping $\Psi:C_b(\R_+)'\to C_0(\R_+)'$ defined by $\Psi(T)=T$ is surjective (due to the Riesz representation theorem).
So $C_0(\R_+)'$ is isomorphic to $\sfrac{C_b(\R_+)'}{\Ker\Psi},$
{\it i.e.} for all $T\in C_b(\R_+)'$ there exists a unique decomposition $T=\mu+L$ with $\mu\in C_0(\R_+)'=\M(\R_+)$ and $L\in\Ker\Psi.$
Additionally $T\in C_0(\R_+)'$ if and only if for all $f\in C_b(\R_+)$ we have $Tf=\lim_{n\to\infty}T(f_n)$
where $f_n$ is defined for $n\in\N$ by
\beq\label{eq:f_n}f_n(x)=\left\{\begin{array}{ll}
f(x)&\text{if}\quad 0\leq x\leq n,\\
(n+1-x)f(x)&\text{if}\quad n<x<n+1,\\
0&\text{if}\quad x\geq n+1.
\end{array}\right.\eeq
Abusing notations, we will now denote for $f\in C_b(\R_+)$ and $\mu\in\M(\R_+)$
\[\mu f:= T_\mu\, f=\int_{\R_+} f\,\d\mu.\]
We end by recalling two notions of convergence in $\M(\R_+)$ which are weaker than the convergence in norm.
\begin{definition*}[Weak convergence]
A sequence $(\mu^n)_{n\in\N}\subset\M(\R_+)$ converges narrowly (resp. weak*) to $\mu\in\M(\R_+)$ as $n\to\infty$ if
\[\lim_{n\to\infty}\mu^nf=\mu f\]
for all $f\in C_b(\R_+)$ (resp. for all $f\in C_0(\R_+)$).
\end{definition*}

\section{Definition of a measure solution}

Before giving the definition of a measure solution to Equation~\eqref{eq:renewal},
we need to state the assumptions on the division rate $\beta.$
We assume that $\beta$ is a 
continuous function on $\R_+$ which satisfies
\[\exists\, a_*,\beta_{\min},\beta_{\max}>0,\ \forall a\geq0,\qquad \beta_{\min}\,\mathds{1}_{[a_*,\infty)}(a)\leq\beta(a)\leq\beta_{\max}.\]

We explain now how we extend the classical sense of Equation~\eqref{eq:renewal} to measures.
Assume that $n(t,a)\in C^1(\R_+\times\R_+)\cap C(\R_+;L^1(\R_+))$ satisfies~\eqref{eq:renewal} in the classical sense,
and let $f\in C^1_c(\R_+)$ the space of continuously differentiable functions with compact support on $\R_+.$
Then we have after integration of~\eqref{eq:renewal} multiplied by $f$
\[\int_{\R_+} n(t,a)f(a)\,\d a=\int_{\R_+} n^{\rm in}(a)f(a)\,\d a+\int_0^t\int_{\R_+}n(s,a)\big(f'(a)-\beta(a) f(a)+\beta(a) f(0)\big)\,\d a\,\d s.\]
This motivates the definition of a measure solution to Equation~\eqref{eq:renewal}.
From now on we will denote by $\A$ the operator defined on $C^1_b(\R_+)=\{f\in C^1(\R_+)\,|\, f,f'\in C_b(\R_+)\}$ by
\[\A f(a):=f'(a)+\beta(a)(f(0)-f(a)).\]

\begin{definition}\label{def:meassol}
A family $(\mu_t)_{t\geq0}\subset\M(\R_+)$ is called a measure solution to Equation~\eqref{eq:renewal} with initial data $n^{\rm in}=\mu^{\rm in}\in\M(\R_+)$ if the mapping $t\mapsto\mu_t$ is narrowly continuous and
for all $f\in C^1_c(\R_+)$ and all $t\geq0,$
\beq\label{eq:meassol}\mu_tf=\mu^{\rm in} f+\int_0^t \mu_s\A f\,\d s,\eeq
{\it i.e.}
\[\int_{\R_+} f(a)\,\d\mu_t(a)=\int_{\R_+} f(a)\,\d\mu^{\rm in}(a)+\int_0^t\bigg(\int_{\R_+}\big[f'(a)-\beta(a) f(a)+\beta(a) f(0)\big]\,\d\mu_s(a)\bigg)\d s.\]
\end{definition}

\

\begin{proposition}\label{prop:sol_prop}
If $t\mapsto\mu_t$ is a solution to Equation~\eqref{eq:renewal} in the sense of the definition above, then for all $f\in C^1_b(\R_+)$ the function $t\mapsto\mu_tf$ is of class $C^1$ and satisfies
\beq\label{eq:meassol2}\left\{\begin{array}{l}
\frac{\d}{\d t}(\mu_tf)=\mu_t\A f,\qquad t\geq0
\vspace{2mm}\\
\mu_0=\mu^{\rm in}.
\end{array}\right.\eeq
Reciprocally any solution to~\eqref{eq:meassol2} satisfies~\eqref{eq:meassol}.
\end{proposition}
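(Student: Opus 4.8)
The plan is to treat the two implications separately, the forward one (Definition~\ref{def:meassol} $\Rightarrow$ \eqref{eq:meassol2}) being the substantial part and the reciprocal one being essentially an application of the fundamental theorem of calculus. The whole difficulty of the forward direction is that the weak formulation~\eqref{eq:meassol} is only assumed for test functions $f\in C^1_c(\R_+)$, whereas \eqref{eq:meassol2} is claimed for all $f\in C^1_b(\R_+)$; so the core of the proof is to upgrade \eqref{eq:meassol} from compactly supported to merely bounded test functions, after which differentiation is immediate.

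Before approximating I would first secure a uniform bound on the total variation over compact time intervals. Fix $t\geq0$. For every $f\in C_0(\R_+)$ the map $s\mapsto\mu_sf$ is continuous on $[0,t]$ by narrow continuity (since $C_0(\R_+)\subset C_b(\R_+)$), hence bounded. Viewing $(\mu_s)_{s\in[0,t]}$ as a pointwise bounded family in $C_0(\R_+)'$, the uniform boundedness principle yields $\sup_{s\in[0,t]}\|\mu_s\|_{C_0(\R_+)'}<\infty$, and the Riesz representation theorem identifies this supremum with $M_t:=\sup_{s\in[0,t]}\|\mu_s\|_{TV}<\infty$. This constant is what will dominate all the integrands below.

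Next I would set up the approximation. Given $f\in C^1_b(\R_+)$, pick smooth cutoffs $\chi_n\in C^1_c(\R_+)$ with $\mathds{1}_{[0,n]}\le\chi_n\le\mathds{1}_{[0,n+1]}$ and $\sup_n\|\chi_n'\|_\infty<\infty$ (the truncation~\eqref{eq:f_n} is only Lipschitz, so a genuinely $C^1$ cutoff is needed), and set $f_n:=\chi_nf\in C^1_c(\R_+)$. Then $f_n(0)=f(0)$ for $n\geq1$, $f_n\to f$ pointwise with $|f_n|\le\|f\|_\infty$, and a direct computation gives $\A f_n-\A f=\chi_n'f+(\chi_n-1)f'+\beta(1-\chi_n)f$, which is supported in $[n,\infty)$ and bounded uniformly in $n$ by a constant $M'$ depending only on $f$ and $\beta_{\max}$. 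Writing \eqref{eq:meassol} for $f_n$ and passing to the limit: on the left $\mu_tf_n\to\mu_tf$ and $\mu^{\rm in}f_n\to\mu^{\rm in}f$ by dominated convergence against the finite measures $|\mu_t|,|\mu^{\rm in}|$; on the right, for each fixed $s$ one has $|\mu_s(\A f_n-\A f)|\le M'|\mu_s|([n,\infty))\to0$, and since this is dominated by $M'M_t$ uniformly in $s\in[0,t]$, a second application of dominated convergence (now in the time variable) gives $\int_0^t\mu_s\A f_n\,\d s\to\int_0^t\mu_s\A f\,\d s$. Note that $\A f\in C_b(\R_+)$ but not $C_0(\R_+)$ in general because of the term $\beta(a)f(0)$; this is precisely why narrow continuity, rather than weak* continuity, is the correct hypothesis to keep $s\mapsto\mu_s\A f$ continuous and the integral meaningful. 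Hence \eqref{eq:meassol} holds for every $f\in C^1_b(\R_+)$.

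Once \eqref{eq:meassol} is established on $C^1_b(\R_+)$, the conclusions follow readily. For such $f$ the integrand $s\mapsto\mu_s\A f$ is continuous on $[0,t]$ (narrow continuity applied to $\A f\in C_b(\R_+)$), so the fundamental theorem of calculus shows $t\mapsto\mu_tf$ is $C^1$ with $\frac{\d}{\d t}(\mu_tf)=\mu_t\A f$; taking $t=0$ in \eqref{eq:meassol} gives $\mu_0=\mu^{\rm in}$, which is \eqref{eq:meassol2}. Conversely, if $(\mu_t)$ solves \eqref{eq:meassol2}, then for any $f\in C^1_c(\R_+)\subset C^1_b(\R_+)$ the function $s\mapsto\mu_sf$ is $C^1$, and integrating the identity $\frac{\d}{\d s}(\mu_sf)=\mu_s\A f$ from $0$ to $t$ together with $\mu_0=\mu^{\rm in}$ returns exactly \eqref{eq:meassol}. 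The only genuine obstacle is the limiting argument of the previous paragraph, and within it the uniform total variation bound $M_t$, without which the passage to the limit in the time integral could fail.
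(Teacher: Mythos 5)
Your proof is correct and follows essentially the same route as the paper: truncate $f$ by $C^1$ cutoffs, pass to the limit in \eqref{eq:meassol} by dominated convergence (first in the age variable for each fixed $s$, then in the time integral), and conclude with the fundamental theorem of calculus. The only addition is your explicit justification, via the uniform boundedness principle, of the bound $\sup_{s\in[0,t]}\|\mu_s\|_{TV}<\infty$ used to dominate the time integral, a point the paper leaves implicit.
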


\begin{proof}
We start by checking that~\eqref{eq:meassol} is also satisfied for $f\in C^1_b(\R_+).$
Let $f\in C^1_b(\R_+),$ $t>0$ and $\rho\in C^1(\R)$ a nonincreasing function which satisfies
$\rho(a)=1$ for $a\leq0$ and $\rho(a)=0$ for $a\geq1.$
For all $n\in\N$ and $a\geq0$ define $f_n(a)=\rho(a-n)f(a).$
For all $n\in\N,$ $f_n\in C^1_c(\R_+)$ satisfies~\eqref{eq:meassol} and it remains to check that we can pass to the limit $n\to\infty.$
By monotone convergence we have $\mu_tf_n\to\mu_tf$ and $\mu^{\rm in}f_n\to\mu^{\rm in}f$ when $n\to\infty.$
Additionally $\|\A f_n\|_\infty\leq(\|\rho'\|_\infty+2\beta_{\max})\|f\|_\infty+\|f'\|_\infty$ 
so by dominated convergence we have $\mu_s\A f_n\to\mu_s\A f$ when $n\to\infty$ for all $s\in[0,t],$
and then $\int_0^t\mu_s\A f_n\,\d s\to\int_0^t\mu_s\A f\,\d s.$

For $f\in C^1_b(\R_+)$ we have $\A f\in C_b(\R_+)$ so $s\mapsto\mu_s\A f$ is continuous and using~\eqref{eq:meassol} we deduce that
\[\frac{\mu_{t+h} f-\mu_tf}{h}=\frac1h\int_{t}^{t+h}\mu_s\A f\,\d s\to\mu_t\A f\qquad\text{when}\ h\to0.\]
\end{proof}

We give now another equivalent notion of weak solutions to Equation~\eqref{eq:renewal}, which will be useful to prove uniqueness.
Compared to~\eqref{eq:meassol}, it uses test functions which depend on both variables $t$ and $a.$

\begin{proposition}\label{prop:weaksol}
A family $(\mu_t)_{t\geq0}\subset\M(\R_+)$
is a solution to Equation~\eqref{eq:renewal} in the sense of Definition~\ref{def:meassol}
if and only if the mapping $t\mapsto\mu_t$ is narrowly continuous and for all $\varphi\in C^1_c(\R_+\times\R_+)$
\beq\label{eq:weaksol}
\int_0^\infty\!\int_{\R_+}\big[\partial_t\varphi(t,a)+\partial_a\varphi(t,a)-\beta(a)\varphi(t,a)+\beta(a)\varphi(t,0)\big]\d\mu_t(a)\,\d t+\int_{\R_+}\varphi(0,a)\,\d\mu^{\rm in}(a)=0.
\eeq
This is also true by replacing $\varphi\in C^1_c(\R_+\times\R_+)$ by $\varphi\in C^1_b(\R_+\times\R_+)$ with compact support in time.
\end{proposition}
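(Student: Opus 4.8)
The plan is to prove the two implications separately, the bridge between them being the chain rule
\[\frac{\d}{\d t}\big(\mu_t\varphi(t,\cdot)\big)=\mu_t\big[\partial_t\varphi(t,\cdot)+\A\varphi(t,\cdot)\big],\]
valid for $\varphi\in C^1_c(\R_+\times\R_+)$, together with the observation that $\partial_a\varphi(t,a)-\beta(a)\varphi(t,a)+\beta(a)\varphi(t,0)=\A\varphi(t,\cdot)(a)$, so that the bracket in~\eqref{eq:weaksol} is exactly $\partial_t\varphi(t,\cdot)+\A\varphi(t,\cdot)$. Once this chain rule is established, integrating it over $t\in[0,\infty)$ and using that $\varphi$ has compact support (so the boundary term at $+\infty$ vanishes and only $-\mu^{\rm in}\varphi(0,\cdot)$ survives at $t=0$, after noting $\mu_0=\mu^{\rm in}$ from~\eqref{eq:meassol} at $t=0$) yields~\eqref{eq:weaksol}. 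Conversely, inserting separable test functions into~\eqref{eq:weaksol} recovers Definition~\ref{def:meassol}.

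To prove that a solution in the sense of Definition~\ref{def:meassol} satisfies~\eqref{eq:weaksol}, I would fix $\varphi\in C^1_c(\R_+\times\R_+)$ and differentiate $g(t):=\mu_t\varphi(t,\cdot)$ by splitting the difference quotient as
\[\frac{g(t+h)-g(t)}{h}=\mu_{t+h}\,\frac{\varphi(t+h,\cdot)-\varphi(t,\cdot)}{h}+\frac{\mu_{t+h}\varphi(t,\cdot)-\mu_t\varphi(t,\cdot)}{h}.\]
For the first term I would use that $\frac{\varphi(t+h,\cdot)-\varphi(t,\cdot)}{h}\to\partial_t\varphi(t,\cdot)$ uniformly (as $\varphi\in C^1_c$), that $\mu_{t+h}\to\mu_t$ narrowly, and crucially that $\sup_{s\in[0,T]}\|\mu_s\|_{TV}<\infty$ on compact time intervals; this last bound follows from the Banach--Steinhaus theorem applied in $C_0(\R_+)'=\M(\R_+)$, since for each fixed $f\in C_0(\R_+)$ the map $s\mapsto\mu_sf$ is continuous, hence bounded on $[0,T]$. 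For the second term, $\varphi(t,\cdot)$ is a fixed function in $C^1_c(\R_+)$, so~\eqref{eq:meassol} gives $\mu_{t+h}\varphi(t,\cdot)-\mu_t\varphi(t,\cdot)=\int_t^{t+h}\mu_s\A\varphi(t,\cdot)\,\d s$, whose difference quotient tends to $\mu_t\A\varphi(t,\cdot)$ by continuity of $s\mapsto\mu_s\A\varphi(t,\cdot)$. This establishes the chain rule; a similar splitting shows $t\mapsto g'(t)$ is continuous, so $g\in C^1$, and the fundamental theorem of calculus finishes this direction.

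For the converse, given~\eqref{eq:weaksol} for all $\varphi\in C^1_c$, I would fix $f\in C^1_c(\R_+)$ and $t_0>0$ and test against $\varphi(t,a)=\theta(t)f(a)$ with $\theta\in C^1_c(\R_+)$; since $\A\varphi(t,\cdot)=\theta(t)\A f$, equation~\eqref{eq:weaksol} becomes
\[\int_0^\infty\theta'(t)\,\mu_tf\,\d t+\int_0^\infty\theta(t)\,\mu_t\A f\,\d t+\theta(0)\,\mu^{\rm in}f=0.\]
Choosing a nonincreasing $\theta=\theta_\ep$ with $\theta_\ep\equiv1$ on $[0,t_0]$ and $\supp\theta_\ep\subset[0,t_0+\ep]$, I would let $\ep\to0$: since $\int_0^\infty\theta_\ep'=-1$ with $\theta_\ep'$ concentrated near $t_0$ and $s\mapsto\mu_sf$ continuous, the first integral tends to $-\mu_{t_0}f$; by dominated convergence the second tends to $\int_0^{t_0}\mu_t\A f\,\d t$; and $\theta_\ep(0)\mu^{\rm in}f=\mu^{\rm in}f$. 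This yields exactly~\eqref{eq:meassol} at $t_0$. The case $t_0=0$ follows by letting $t_0\to0^+$ in the identity just obtained, giving $\mu_0f=\mu^{\rm in}f$.

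Finally, the extension to $\varphi\in C^1_b(\R_+\times\R_+)$ with compact support in time is obtained exactly as in the proof of Proposition~\ref{prop:sol_prop}: one truncates in the age variable via $\varphi_n(t,a)=\rho(a-n)\varphi(t,a)$, which lies in $C^1_c(\R_+\times\R_+)$, and passes to the limit $n\to\infty$ using monotone convergence on the $\mu_t$- and $\mu^{\rm in}$-terms and dominated convergence for the $\A\varphi_n$-term, the uniform bound $\|\A\varphi_n\|_\infty\leq(\|\rho'\|_\infty+2\beta_{\max})\|\varphi\|_\infty+\|\partial_a\varphi\|_\infty$ together with $\sup_{s\in[0,T]}\|\mu_s\|_{TV}<\infty$ providing the needed domination; the reverse inclusion is immediate. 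I expect the main obstacle to be the rigorous justification of the chain rule for the time-dependent test function, specifically controlling the term $\mu_{t+h}\big(\frac{\varphi(t+h,\cdot)-\varphi(t,\cdot)}{h}-\partial_t\varphi(t,\cdot)\big)$, for which the local boundedness of the total variation norm via Banach--Steinhaus is the essential ingredient.
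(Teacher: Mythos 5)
Your proof is correct, but the forward direction takes a genuinely different route from the paper's. The paper never differentiates $t\mapsto\mu_t\varphi(t,\cdot)$: instead it uses $\partial_t\varphi(t,\cdot)\in C^1_b(\R_+)$ as a test function in the already-integrated identity~\eqref{eq:meassol} (legitimate by the extension argument of Proposition~\ref{prop:sol_prop}), integrates over $t$, and swaps the order of integration by Fubini, using $\int_s^\infty\partial_t(\cdot)\,\d t=-(\cdot)(s)$ from the compact time support; this produces~\eqref{eq:weaksol} directly for $\varphi\in C^1_b(\R_+\times\R_+)$ compactly supported in time, with no difference quotients. Your route instead establishes the pointwise chain rule $\frac{\d}{\d t}\big(\mu_t\varphi(t,\cdot)\big)=\mu_t\big[\partial_t\varphi(t,\cdot)+\A\varphi(t,\cdot)\big]$ via the splitting of the difference quotient, and the key technical ingredient you identify --- local boundedness of $\sup_{s\in[0,T]}\|\mu_s\|_{TV}$ via Banach--Steinhaus from narrow continuity --- is exactly right and is in fact also implicitly needed to justify the paper's Fubini step, so your write-up makes explicit something the paper leaves tacit. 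The price you pay is that the chain rule needs uniform-in-$a$ convergence of the time difference quotient, which is why you sensibly restrict first to $\varphi\in C^1_c(\R_+\times\R_+)$ and only then truncate in age to reach $C^1_b$ test functions, whereas the paper handles $C^1_b$ in one pass. The converse direction (separable test functions $\theta_\ep(t)f(a)$ with $\theta_\ep$ decreasing to $\mathds{1}_{[0,t_0]}$ and $\theta_\ep'$ concentrating at $t_0$) is essentially identical to the paper's choice $\rho_n(t)=\rho(n(t-T))$. Two cosmetic points: ``monotone convergence'' in your truncation step should be dominated convergence for signed $\mu_t$ (the paper commits the same abuse), and your stated bound on $\|\A\varphi_n\|_\infty$ should be read as a bound on the full bracket including $\partial_t\varphi_n$, which is anyway dominated by $\|\partial_t\varphi\|_\infty$.
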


\begin{proof}
Assume that $(\mu_t)_{t\geq0}$ satisfies Definition~\ref{def:meassol} and let $\varphi\in C^1_b(\R_+\times\R_+)$ compactly supported in time.
As we have seen in the proof of Proposition~\ref{prop:sol_prop}, we can use $\partial_t\varphi(t,\cdot)\in C^1_b(\R_+)$ as a test function in~\eqref{eq:meassol}.
After integration in time we get
\begin{align*}
\int_0^\infty\!\int_{\R_+}\partial_t\varphi(t,a)\d\mu_t(a)\d t&=\int_0^\infty\!\int_{\R_+}\partial_t\varphi(t,a)\d\mu^{\rm in}(a)\d t\\
&\hspace{8mm}+\int_0^\infty\!\int_0^t\int_{\R_+}\big[\partial_a\partial_t\varphi(t,a)-\beta(a)\partial_t\varphi(t,a)+\beta(a)\partial_t\varphi(t,0)\big]\d\mu_s(a)\,\d s\,\d t\\
&=\int_{\R_+}\!\bigg(\int_0^\infty\partial_t\varphi(t,a)dt\bigg)\d\mu^{\rm in}(a)\\
&\hspace{8mm}+\int_0^\infty\!\int_{\R_+}\!\bigg(\int_s^\infty\big[\partial_t\partial_a\varphi(t,a)-\beta(a)\partial_t\varphi(t,a)+\beta(a)\partial_t\varphi(t,0)\big]\d t\bigg)\d\mu_s(a)\,\d s\\
&=-\int_{\R_+}\!\varphi(0,a)\d\mu^{\rm in}(a)-\int_0^\infty\!\int_{\R_+}\big[\partial_a\varphi(s,a)-\beta(a)\varphi(s,a)+\beta(a)\varphi(s,0)\big]\d\mu_s(a)\d s.
\end{align*}
Reciprocally let $T>0,$ $f\in C^1_c(\R_+),$ and assume that $t\mapsto\mu_t$ is narrowly continuous and satisfies~\eqref{eq:weaksol}.
We use the function $\rho$ defined in the proof of Proposition~\ref{prop:sol_prop} to define for $n\in\N$ and $t\geq0,$ $\rho_n(t)=\rho\big(n(t-T)\big).$
It is a decreasing sequence of decreasing $C^1_c(\R_+)$ functions which converges pointwise to $\1_{[0,T]}(t),$
and $\rho'_n$ (seen as an element of $\M(\R_+)$) converges narrowly to $-\delta_T.$
Using $\varphi_n(t,a)=\rho_n(t)f(a)$ as a test function in~\eqref{eq:weaksol} we get
\begin{align*}
0&=\int_0^\infty\!\int_{\R_+}\big[\rho_n'(t)f(a)+\rho_n(t)\A f(a)\big]\d\mu_t(a)\,\d t+\int_{\R_+}f(a)\,\d\mu^{\rm in}(a)\\
&=\int_0^\infty\rho_n'(t)\,(\mu_tf)\,\d t+\int_0^\infty\rho_n(t)\,(\mu_t\A f)\,\d t+\mu^{\rm in}f
\xrightarrow[n\to\infty]{}-\mu_Tf+\int_0^T\mu_t\A f\,\d t+\mu^{\rm in}f.
\end{align*}
\end{proof}

%

\section{The dual renewal equation}

To build a measure solution to Equation~\eqref{eq:renewal} we use a duality approach.
The idea is to start with the dual problem: find a solution to the dual renewal equation
\beq\label{eq:dual}
\p_tf(t,a)-\p_af(t,a)+\beta(a)f(t,a)=\beta(a)f(t,0),\qquad f(0,a)=f_0(a).
\eeq
As for the direct problem, we first give a definition of weak solutions for~\eqref{eq:dual} by using the method of characteristics.
Assume that $f\in C^1(\R_+\times\R_+)$ satisfies~\eqref{eq:dual} in the classical sense.
Then easy computations show that for all $a\geq0$ the function $\psi(t)=f(t,a-t)$ is solution to the ordinary differential equation $\psi'(t)+\beta(a-t)\psi(t)=0.$
After integration we get that $f$ satisfies
\[f(t,a-t)=f_0(a)\e^{-\int_0^t\beta(a-u)\d u}+\int_0^t\beta(a-\tau)\e^{-\int_\tau^t\beta(a-u)\d u}f(\tau,0)\,\d\tau\]
and the change of variable $a\leftarrow a+t$ leads to the following definition.

\

\begin{definition}
We say that $f\in C_b(\R_+\times\R_+)$ is a solution to~\eqref{eq:dual} when, for all $t,a\geq 0,$
\begin{align}
f(t,a)&=f_0(a+t)\e^{-\int_0^t\beta(a+u)\d u}+\int_0^t\e^{-\int_0^\tau\beta(a+u)\d u}\beta(a+\tau)f(t-\tau,0)\,\d\tau \label{eq:Duhamel1}\\
&=f_0(a+t)\e^{-\int_a^{a+t}\beta(u)\d u}+\int_a^{t+a}\e^{-\int_a^\tau\beta(u)\d u}\beta(\tau)f(a+t-\tau,0)\,\d\tau.\label{eq:Duhamel2}
\end{align}
\end{definition}

\noindent Formulations~\eqref{eq:Duhamel1} and~\eqref{eq:Duhamel2} are the same up to changes of variables, but both will be useful in the sequel.

\begin{theorem}\label{th:dual}
For all $f_0\in C_b(\R_+),$ there exists a unique $f\in C_b(\R_+\times\R_+)$ solution to~\eqref{eq:dual}.
Additionally
\[\begin{array}{l}
\text{for all}\ t\geq0,\ \|f(t,\cdot)\|_\infty\leq\|f_0\|_\infty,
\vspace{2mm}\\
\text{if}\ f_0\geq0\ \text{then for all}\ t\geq0,$ $f(t,\cdot)\geq0,
\vspace{2mm}\\
\text{if}\ f_0\in C^1_b(\R_+)\ \text{then for all}\ T>0,\ f\in C^1_b([0,T]\times\R_+).
\end{array}\]
\end{theorem}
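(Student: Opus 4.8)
The plan is to read the defining identities \eqref{eq:Duhamel1}--\eqref{eq:Duhamel2} as a fixed point equation and solve it by a contraction argument, then read off the three listed properties. Introduce $B(a):=\int_0^a\beta(u)\,\d u$ and, for a fixed horizon $T>0$, the operator $\Phi$ on $C_b([0,T]\times\R_+)$ defined by the right-hand side of \eqref{eq:Duhamel1},
\[\Phi[f](t,a):=f_0(a+t)\,\e^{-\int_0^t\beta(a+u)\d u}+\int_0^t \e^{-\int_0^\tau\beta(a+u)\d u}\beta(a+\tau)\,f(t-\tau,0)\,\d\tau,\]
so that a solution on $[0,T]$ is exactly a fixed point of $\Phi$. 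Everything rests on the mass identity
\[\e^{-\int_0^t\beta(a+u)\d u}+\int_0^t \e^{-\int_0^\tau\beta(a+u)\d u}\beta(a+\tau)\,\d\tau=1,\]
obtained by recognizing the integrand as $-\frac{\d}{\d\tau}\e^{-\int_0^\tau\beta(a+u)\d u}$. First I would check that $\Phi$ maps $C_b([0,T]\times\R_+)$ into itself (joint continuity of the integral in $(t,a)$ is routine, and the mass identity gives $\|\Phi[f]\|_\infty\le\|f_0\|_\infty+\|f\|_\infty$). Since $\Phi[f_1]-\Phi[f_2]$ only involves the boundary values $f_1(\cdot,0)-f_2(\cdot,0)$, the same identity yields a Lipschitz estimate with constant $\sup_{t\le T,\,a\ge0}(1-\e^{-\int_0^t\beta(a+u)\d u})\le 1-\e^{-\beta_{\max}T}<1$. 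Thus $\Phi$ is a contraction on $C_b([0,T]\times\R_+)$ for every finite $T$, giving a unique solution on each $[0,T]$; uniqueness makes these restrictions compatible and produces a unique global continuous solution.

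The sup bound and the positivity I would obtain together, from the iterates $f^{(n)}:=\Phi^n[0]$, which converge uniformly to the fixed point. By induction, if $\|f^{(n)}(s,\cdot)\|_\infty\le\|f_0\|_\infty$ for all $s$, then plugging $|f_0|\le\|f_0\|_\infty$ and $|f^{(n)}(t-\tau,0)|\le\|f_0\|_\infty$ into $\Phi$ and using the mass identity gives $\|f^{(n+1)}(t,\cdot)\|_\infty\le\|f_0\|_\infty$; passing to the limit yields $\|f(t,\cdot)\|_\infty\le\|f_0\|_\infty$ for all $t$. Because this bound is uniform in $T$, it also upgrades the local-in-time solution to a genuine element of $C_b(\R_+\times\R_+)$. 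Likewise $f^{(0)}=0\ge0$ and $\Phi$ preserves the cone of nonnegative functions when $f_0\ge0$ (every factor in $\Phi$ is nonnegative), so all $f^{(n)}\ge0$ and hence $f\ge0$.

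The delicate point, and the one I expect to be the main obstacle, is the $C^1$ regularity when $f_0\in C^1_b(\R_+)$, precisely because $\beta$ is only assumed continuous, so neither the kernel $\beta(a+\tau)$ nor the integrand of $B$ may be differentiated. I would first show that the boundary trace $g(t):=f(t,0)$ is $C^1$. Setting $a=0$ in \eqref{eq:Duhamel1}, $g$ solves the scalar renewal equation $g=S+k*g$ with source $S:=f_0\,\e^{-B}\in C^1_b$ and kernel $k:=\beta\,\e^{-B}\in C_b\cap L^1(\R_+)$. Since $k$ cannot be differentiated, rather than differentiating this equation I would define $v$ as the unique continuous solution (same contraction, as $\int_0^T k=1-\e^{-B(T)}<1$) of the companion renewal equation $v=S'+k(\cdot)g(0)+k*v$, and then verify (integrating this equation and using Fubini) that $g(0)+\int_0^\cdot v$ solves the equation for $g$; by uniqueness $g'=v\in C_b([0,T])$, so $g\in C^1$.

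Once $g\in C^1$ is in hand, I would differentiate the Duhamel formulas, always arranging matters so that derivatives fall on $g$ or $f_0$ and never on $\beta$. Differentiating \eqref{eq:Duhamel1} in $t$ (the $t$-derivative of the convolution transfers onto $g$, producing $\beta(a+t)\e^{-\int_0^t\beta(a+u)\d u}g(0)$ together with $\int_0^t\e^{-\int_0^\tau\beta(a+u)\d u}\beta(a+\tau)g'(t-\tau)\,\d\tau$) gives $\partial_tf$ continuous and, via the mass identity, bounded uniformly in $a$ on $[0,T]$. For $\partial_af$ I would use \eqref{eq:Duhamel2} through the factorization $f(t,a)=\e^{B(a)}H(t,a)$ with $H(t,a):=f_0(a+t)\e^{-B(a+t)}+\int_a^{a+t}\e^{-B(\tau)}\beta(\tau)g(a+t-\tau)\,\d\tau$: here $a$ has been removed from the factor $\beta(\tau)\e^{-B(\tau)}$ inside the integral and survives only in the limits and in the argument of $g$, so $\partial_aH$ exists without differentiating $\beta$, and $\partial_af=\beta(a)f+\e^{B(a)}\partial_aH$. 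Recombining the exponentials, every surviving term carries a factor $\e^{-(B(\tau)-B(a))}\le1$, which keeps $\partial_af$ bounded uniformly in $a$ on $[0,T]$; continuity is clear. Hence $f\in C^1_b([0,T]\times\R_+)$ for every $T$.
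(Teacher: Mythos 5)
Your proposal is correct. For existence, uniqueness, the sup bound and positivity you follow essentially the paper's route (Banach fixed point on the Duhamel formulation), with two improvements of detail: your Lipschitz constant $1-\e^{-\beta_{\max}T}<1$, read off from the mass identity, makes the map a contraction on $C_b([0,T]\times\R_+)$ for \emph{every} $T$, so you avoid the paper's iteration over $[T,2T],[2T,3T],\dots$ (its cruder constant $T\|\beta\|_\infty$ forces small $T$); and you obtain $\|f(t,\cdot)\|_\infty\leq\|f_0\|_\infty$ from the Picard iterates rather than from invariance of the closed ball of radius $\|f_0\|_\infty$ --- equivalent arguments. Where you genuinely diverge is the $C^1$ regularity. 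The paper reruns the contraction in $C^1_b([0,T]\times\R_+)$ with the norm $\|f\|_\infty+\|\p_tf\|_\infty+\|\p_af\|_\infty$, restricted to the invariant set $\{f(0,\cdot)=f_0\}$, computing $\p_t\Gamma f$ and $\p_a\Gamma f$ so that no derivative ever falls on $\beta$; this yields regularity and, as a by-product, the formula \eqref{eq:d_tGamma} identifying $\p_tM_tf$ as the fixed point with datum $\A f$, a fact reused verbatim in the proof of Proposition~\ref{prop:rightMt}. You instead bootstrap: you first prove that the boundary trace $g=f(\cdot,0)$ is $C^1$ by solving the companion Volterra equation $v=S'+k\,g(0)+k*v$ and identifying $g'=v$ via integration, Fubini and uniqueness, and only then differentiate the Duhamel formulas, arranging (as the paper also must) that derivatives land only on $f_0$ and $g$. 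Both are valid; yours isolates the one-dimensional boundary object that carries all the regularity and avoids setting up a second Banach space, while the paper's keeps a single mechanism for everything and hands the semigroup section the identity it needs. If you adopt your route, observe that your explicit expression for $\p_tf$ coincides with \eqref{eq:d_tGamma} with $\p_tf(\cdot,0)=g'$, so the later identification $\p_tM_tf=M_t\A f$ remains available.
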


\begin{proof}
To prove the existence and uniqueness of a solution we use the Banach fixed point theorem.
For $f_0\in C_b(\R_+)$ we define the operator $\Gamma:C_b([0,T]\times \R_+)\to C_b([0,T]\times\R_+)$ by
\[\Gamma f(t,a)= f_0(t+a)\e^{-\int_0^t\beta(a+u)\d u}+\int_0^t \e^{-\int_0^\tau\beta(a+u)\d u}\beta(a+\tau)f(t-\tau,0)\,\d\tau.\]
We easily have
\[\|\Gamma f-\Gamma g\|_\infty\leq T\|\beta\|_\infty\|f-g\|_\infty,\]
so $\Gamma$ is a contraction if $T<\|\beta\|_\infty$ and there is a unique fixed point in $C_b([0,T]\times\R_+).$
Additionally, since $\|f\|_\infty\leq\|f_0\|_\infty$ implies
\[\|\Gamma f\|_\infty\leq\|f_0\|_\infty\bigg(\e^{-\int_0^t\beta(a+u)\d u}+\int_0^t \e^{-\int_0^\tau\beta(a+u)\d u}\beta(a+\tau)\,\d\tau\bigg)=\|f_0\|_\infty,\]
the closed ball of radius $\|f_0\|_\infty$ is invariant under $\Gamma$ and the unique fixed point necessarily belongs to this ball.
Iterating on $[T,2T],$ $[2T,3T],$ $\dots,$ we obtain a unique global solution in $C_b(\R_+\times\R_+),$ and this solution satisfies $\|f\|_\infty\leq\|f_0\|_\infty.$
Since $\Gamma$ preserves non-negativity if $f_0\geq0,$
we get similarly that $f$ is nonnegative when $f_0$ is nonnegative.

\medskip

If $f_0\in C^1_b(\R_+)$ we can do the fixed point in $C^1_b([0,T]\times \R_+)$ endowed with the norm
\[\|f\|_{C^1_b}:=\|f\|_\infty+\|\p_tf\|_\infty+\|\p_af\|_\infty.\]
More precisely we do it in the closed and $\Gamma$-invariant subset $\{f\in C^1_b([0,T]\times \R_+),\ f(0,\cdot)=f_0\}.$
We have
\[\Gamma f(t,a)=f_0(t+a)\e^{-\int_a^{t+a}\beta(u)\d u}+\int_a^{t+a}\e^{-\int_a^\tau\beta(u)\d u}\beta(\tau)f(t-\tau+a,0)\,\d\tau\]
so by differentiation we get
\begin{align}
\p_t\Gamma f(t,a)&=[f_0'(t+a)-\beta(t+a)f_0(t+a)]\e^{-\int_a^{t+a}\beta(u)\d u}\nonumber\\
&\hspace{10mm}+\e^{-\int_a^{t+a}\beta(u)\d u}\beta(t+a)f_0(0)
+\int_a^{t+a}\e^{-\int_a^\tau\beta(u)\d u}\beta(\tau)\partial_tf(t-\tau+a,0)\,\d\tau\nonumber\\
&=\A f_0(t+a)\e^{-\int_a^{t+a}\beta(u)\d u}+\int_a^{t+a}\e^{-\int_a^\tau\beta(u)\d u}\beta(\tau)\partial_tf(t-\tau+a,0)\,\d\tau.\label{eq:d_tGamma}
\end{align}
and
\begin{align*}
\p_a\Gamma f(t,a)&=\big[f_0'(t+a)+(\beta(a)-\beta(t+a))f_0(t+a)\big]\e^{-\int_a^{t+a}\beta(u)\d u}
+\e^{-\int_a^{t+a}\beta(u)\d u}\beta(t+a)f_0(0)-\beta(a)f(t,0)\\
&\hspace{10mm}
+\int_a^{t+a}\beta(a)\e^{-\int_a^\tau\beta(u)\d u}\beta(\tau)f(t-\tau+a,0)\,\d\tau
+\int_a^{t+a}\e^{-\int_a^\tau\beta(u)\d u}\beta(\tau)\partial_tf(t-\tau+a,0)\,\d\tau\\
&=\big[f_0'(t+a)+(\beta(a)-\beta(t+a))(f_0(t+a)-f_0(0))\big]\e^{-\int_a^{t+a}\beta(u)\d u}\\
&\hspace{60mm}
+\int_a^{t+a}\e^{-\int_a^\tau\beta(u)\d u}(\beta(\tau)-\beta(a))\partial_tf(t-\tau+a,0)\,\d\tau.
\end{align*}
Finally we get
\[\|\Gamma f-\Gamma g\|_{C^1_b}\leq T\|\beta\|_\infty\big[\|f-g\|_\infty+2\|\p_t(f-g)\|_\infty\big]
\leq 2T\|\beta\|_\infty\|f-g\|_{C^1_b}.\]
We conclude that if $f_0\in C^1_b(\R_+)$ then the unique solution belongs to $C^1_b([0,T]\times \R_+)$ for all $T>0.$

\end{proof}

\begin{lemma}\label{lm:finite_prop}
Let $f_0,g_0\in C_b(\R_+)$ such that
\[\exists A>0,\ \forall a\in[0,A],\ f_0(a)=g_0(a)\]
and let $f$ and $g$ the solutions to Equation~\eqref{eq:dual} with initial distributions $f_0$ and $g_0$ respectively.
Then for all $T\in(0,A)$ we have
\[\forall t\in[0,T],\ \forall a\in[0,A-T],\qquad f(t,a)=g(t,a).\]
\end{lemma}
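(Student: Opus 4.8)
The plan is to exploit the Duhamel representation~\eqref{eq:Duhamel2} together with the linearity of the problem in order to run a finite-speed-of-propagation argument. Set $h:=f-g$. Since formulas~\eqref{eq:Duhamel1}--\eqref{eq:Duhamel2} are linear both in the initial datum and in the boundary trace, $h$ is precisely the solution to~\eqref{eq:dual} associated with the initial datum $h_0:=f_0-g_0$, which by hypothesis vanishes on $[0,A]$ (uniqueness from Theorem~\ref{th:dual} makes this identification rigorous). It therefore suffices to prove that $h(t,a)=0$ whenever $a+t\leq A$: for $t\in[0,T]$ and $a\in[0,A-T]$ one has $a+t\leq(A-T)+T=A$, so the stated rectangle is contained in the region $\{a+t\leq A\}$ (in fact a slightly larger triangular region is covered).

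The key step is to control the boundary trace $s\mapsto h(s,0)$. Evaluating~\eqref{eq:Duhamel1} at $a=0$ gives, for $s\in[0,A]$ where $h_0(s)=0$,
\[
h(s,0)=\int_0^s\e^{-\int_0^\tau\beta(u)\d u}\beta(\tau)\,h(s-\tau,0)\,\d\tau,
\]
a homogeneous Volterra equation for the continuous function $s\mapsto h(s,0)$. Bounding the exponential by $1$ and $\beta$ by $\beta_{\max}$, and setting $u(s):=|h(s,0)|$, the change of variable $\sigma=s-\tau$ yields the integral inequality $u(s)\leq\beta_{\max}\int_0^s u(\sigma)\,\d\sigma$ on $[0,A]$. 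Gronwall's lemma, applied with vanishing additive constant, then forces $u\equiv0$, that is $h(s,0)=0$ for all $s\in[0,A]$.

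It remains to feed this back into~\eqref{eq:Duhamel2}. For a point $(t,a)$ with $a+t\leq A$, the first term $h_0(a+t)\e^{-\int_a^{a+t}\beta(u)\d u}$ vanishes because $a+t\leq A$; and in the integral, the argument $a+t-\tau$ of the trace, as $\tau$ ranges over $[a,t+a]$, ranges over $[0,t]\subset[0,A]$ (using $t\leq a+t\leq A$), so that $h(a+t-\tau,0)=0$ by the previous step. Hence $h(t,a)=0$, which is the desired conclusion.

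The main obstacle is conceptual rather than computational: one must recognize that $f(t,a)$ depends on the initial datum only through its restriction to $[0,a+t]$, a dependence channelled entirely through the single point $a+t$ and the boundary trace on $[0,t]$. Once this domain-of-dependence structure is identified, the only genuine work is the Gronwall estimate closing the argument on the trace; the continuity of $h$ required to apply it is guaranteed by Theorem~\ref{th:dual}.
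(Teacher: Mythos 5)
Your proof is correct, but it takes a genuinely different route from the paper. The paper's proof is a one-line invariance argument: the set $\{f\in C_b([0,T]\times\R_+)\,:\,f(t,a)=g(t,a)\ \text{for}\ t\in[0,T],\ a\in[0,A-T]\}$ is a nonempty closed subset of the complete space on which the contraction $\Gamma$ acts, and one checks it is $\Gamma$-invariant when $f_0=g_0$ on $[0,A]$; the unique fixed point therefore lies in it. You instead use linearity to reduce to $h=f-g$ with $h_0\equiv 0$ on $[0,A]$ (correctly justified via uniqueness in Theorem~\ref{th:dual}), isolate the boundary trace $h(\cdot,0)$ as the solution of a homogeneous Volterra equation on $[0,A]$, kill it by Gronwall, and then propagate the vanishing to the triangle $\{a+t\leq A\}$ through~\eqref{eq:Duhamel2}. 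Both arguments are sound and both ultimately rest on the same domain-of-dependence structure; the paper's version is shorter but leaves the invariance check (which encodes exactly the bookkeeping you carry out explicitly) to the reader, while yours is self-contained, makes the mechanism transparent, and yields the marginally sharper conclusion that $f=g$ on the whole triangle $a+t\leq A$ rather than only on the rectangle $[0,T]\times[0,A-T]$. One stylistic remark: the Gronwall step could be replaced by invoking uniqueness for the Volterra fixed-point equation at $a=0$ (zero is a solution, hence the solution), which would bring your argument even closer in spirit to the paper's.
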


\begin{proof}
The closed subspace $\{f\in C_b([0,T]\times \R_+),\ \forall t\in[0,T],\,\forall a\in[0,A-T],\ f(t,a)=g(t,a)\}$ is invariant under $\Gamma$ if $f_0$ satisfies $\forall a\in[0,A],\ f_0(a)=g_0(a).$
\end{proof}

\begin{proposition}\label{prop:rightMt}
The family of operators $(M_t)_{t\geq0}$ defined on $C_b(\R_+)$ by
$M_tf_0=f(t,\cdot)$
is a semigroup, {\it i.e.}
\[M_0f=f\qquad and\qquad M_{t+s}f=M_t(M_sf).\]
Additionally it is a positive and conservative contraction, {\it i.e} for all $t\geq0$ we have
\begin{align*}
&f\geq0\ \implies\ M_tf\geq0,\\
&M_t\mathbf1=\mathbf1,\\
&\forall f\in C_b(\R_+),\quad\|M_tf\|_\infty\leq\|f\|_\infty.
\end{align*}
Finally it satisfies, for all $f\in C^1_b(\R_+)$ and all $t>0,$
\[\partial_t M_tf=M_t\A f=\A M_tf.\]
\end{proposition}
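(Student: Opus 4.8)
The plan is to read off every assertion from the fixed-point construction of Theorem~\ref{th:dual} and its uniqueness part, without solving anything new. Throughout write $f(t,a)=M_tf_0(a)$ for the unique solution of~\eqref{eq:dual}.

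First the semigroup law. The equality $M_0f=f$ is immediate from evaluating~\eqref{eq:Duhamel1} at $t=0$. For $M_{t+s}=M_tM_s$ I would fix $s\geq0$ and set $\tilde f(t,a):=f(s+t,a)$. Splitting the time integral in~\eqref{eq:Duhamel1} written for $f(s+t,a)$ at $\tau=t$ and substituting $\tau=t+\sigma$ in the tail, the weight $\e^{-\int_0^{t+\sigma}\beta(a+u)\d u}$ factorizes as $\e^{-\int_0^t\beta(a+u)\d u}\,\e^{-\int_0^\sigma\beta((a+t)+v)\d v}$; recognizing the remaining $\sigma$-integral as the Duhamel expression for $f(s,a+t)$ shows that $\tilde f$ satisfies~\eqref{eq:Duhamel1} with $f_0$ replaced by $f(s,\cdot)=M_sf_0$. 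Uniqueness in Theorem~\ref{th:dual} then forces $\tilde f(t,\cdot)=M_t(M_sf_0)$, that is $M_{t+s}f_0=M_tM_sf_0$, and exchanging the roles of $s$ and $t$ also gives commutativity.

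Positivity and the contraction estimate are exactly the two bulleted conclusions already recorded in Theorem~\ref{th:dual}. For $M_t\mathbf1=\mathbf1$ I would only check that the constant function $\mathbf1$ is the fixed point of $\Gamma$ when $f_0=\mathbf1$, which is the identity $\e^{-\int_0^t\beta(a+u)\d u}+\int_0^t\e^{-\int_0^\tau\beta(a+u)\d u}\beta(a+\tau)\,\d\tau=1$ already used for the contraction estimate in Theorem~\ref{th:dual}; uniqueness then yields $M_t\mathbf1=\mathbf1$.

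It remains to treat the generator identities, where I take $f_0\in C^1_b(\R_+)$, so that $f\in C^1_b([0,T]\times\R_+)$ by Theorem~\ref{th:dual} and hence $M_tf_0=f(t,\cdot)\in C^1_b(\R_+)$ with $\partial_tf$ continuous and bounded. The key observation for $\partial_tM_tf=M_t\A f$ is that formula~\eqref{eq:d_tGamma}, read at the fixed point, is precisely the Duhamel identity~\eqref{eq:Duhamel2} for the function $\partial_tf$ with initial datum $\A f_0\in C_b(\R_+)$; by the uniqueness part of Theorem~\ref{th:dual} this gives $\partial_tf(t,\cdot)=M_t(\A f_0)$. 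For the second identity $\partial_tM_tf=\A M_tf$ I would bootstrap rather than verify directly that the mild solution is classical: fixing $t_0\geq0$ and applying the first identity to the datum $\phi:=M_{t_0}f_0\in C^1_b(\R_+)$ gives $\partial_\sigma(M_\sigma\phi)(a)\big|_{\sigma=0}=\A\phi(a)=\A M_{t_0}f_0(a)$, while the semigroup law gives $M_\sigma\phi=M_{\sigma+t_0}f_0=f(\sigma+t_0,\cdot)$, whose $\sigma$-derivative at $0$ is $\partial_tf(t_0,\cdot)$; comparing yields $\partial_tf(t_0,\cdot)=\A M_{t_0}f_0$, and together with the first identity also the commutation $M_t\A f=\A M_tf$. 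The only genuinely non-formal steps are the change of variables turning~\eqref{eq:Duhamel1} into the semigroup law and the recognition that~\eqref{eq:d_tGamma} is itself a Duhamel equation; the latter is the crux, since it is what converts the regularity statement of Theorem~\ref{th:dual} into the generator identity via uniqueness, everything else being a direct quotation of Theorem~\ref{th:dual} or a one-line consequence of the semigroup property.
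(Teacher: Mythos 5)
Your argument is correct and follows the paper's route almost step for step: the semigroup law is obtained exactly as in the paper by showing that $(t,a)\mapsto f(t+s,a)$ satisfies the Duhamel fixed-point equation with datum $M_sf_0$ and invoking uniqueness; conservativity, positivity and the contraction are quoted from Theorem~\ref{th:dual}; and the identity $\partial_tM_tf=M_t\A f$ is derived, as in the paper, by recognizing~\eqref{eq:d_tGamma} at the fixed point as the Duhamel equation for $\partial_t f$ with initial datum $\A f_0$. The one place you genuinely diverge is the second generator identity $\partial_tM_tf=\A M_tf$: the paper gets it by asserting that the $C^1_b$ mild solution solves~\eqref{eq:dual} classically, whereas you bootstrap it from the first identity applied to the datum $\phi=M_{t_0}f_0\in C^1_b(\R_+)$ combined with the semigroup law, so that $\partial_tf(t_0,\cdot)=\partial_\sigma(M_\sigma\phi)\big|_{\sigma=0}=\A\phi=\A M_{t_0}f_0$. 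Your variant buys a small economy: it never needs to check that the mild solution is a classical solution of the PDE (a fact the paper leaves implicit in the computations of $\partial_t\Gamma f$ and $\partial_a\Gamma f$), at the cost of using the already-proved semigroup property and the regularity statement $M_{t_0}f_0\in C^1_b(\R_+)$ from Theorem~\ref{th:dual}; both are legitimate, and the conclusion $M_t\A f=\A M_tf$ follows identically.
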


\begin{proof}
Let $f\in C_b(\R_+),$ fix $s\geq0,$ and define $g(t,a)=M_t(M_sf)(a)$ and $h(t,a)=M_{t+s}f(a).$
We have $g(0,a)=h(0,a)$ and
\[g(t,a)=M_sf(t+a)\e^{-\int_0^t\beta(a+u)\d u}
+\int_0^t\e^{-\int_0^\tau\beta(a+u)\d u}\beta(a+\tau)g(t-\tau,0)\,\d\tau\]
and
\begin{align*}
h(t,a)&=f(t+s+a)\e^{-\int_0^{t+s}\beta(a+u)du}
+\int_0^t\e^{-\int_0^\tau\beta(a+u)\d u}\beta(a+\tau)h(t-\tau,0)\,\d\tau\\
&\hskip12mm +\int_t^{t+s}\e^{-\int_0^\tau\beta(a+u)\d u}\beta(a+\tau)M_{t+s-\tau}f(0)\,\d\tau\\
&=f(t+s+a)\e^{-\int_0^{t+s}\beta(a+u)\d u}
+\int_0^t\e^{-\int_0^\tau\beta(a+u)\d u}\beta(a+\tau)h(t-\tau,0)\,\d\tau\\
&\hskip12mm +\int_0^s\e^{-\int_0^{\tau+t}\beta(a+u)\d u}\beta(a+\tau+t)M_{s-\tau}f(0)\,\d\tau\\
&=\bigg[f(t+a+s)\e^{-\int_0^s\beta(t+a+u)\d u}
+\int_0^s\e^{-\int_0^\tau\beta(t+a+u)\d u}\beta(t+a+\tau)M_{s-\tau}f(0)\,d\tau\bigg]\e^{-\int_0^t\beta(a+u)\d u}\\
&\hskip12mm +\int_0^t\e^{-\int_0^\tau\beta(a+u)\d u}\beta(a+\tau)h(t-\tau,0)\,\d\tau\\
&=M_sf(t+a)\e^{-\int_0^t\beta(a+u)\d u}+\int_0^t\e^{-\int_0^\tau\beta(a+u)\d u}\beta(a+\tau)h(t-\tau,0)\,\d\tau.
\end{align*}
By uniqueness of the fixed point we deduce that $g(t,a)=h(t,a)$ for all $t\geq0.$

The conservativeness $M_t\mathbf1=\mathbf1$ is straightforward computations and the positivity and the contraction property follow immediately from Theorem~\ref{th:dual}.

Now consider $f\in C^1_b(\R_+).$
From~\eqref{eq:d_tGamma} we deduce that $\partial_tM_tf$ is the fixed point of $\Gamma$ with initial data $\A f.$
By uniqueness of the fixed point we deduce that $\partial_tM_tf=M_t\A f.$
But since $f\in C^1_b(\R_+),$ $M_tf$ satisfies~\eqref{eq:dual} in the classical sense, {\it i.e.} $\partial_t M_tf(a)=\A M_tf(a),$ and the proof is complete.
\end{proof}

\begin{remark}
The semigroup $(M_t)_{t\geq0}$ is not strongly continuous, {\it i.e.} we do not have $\lim_{t\to0}\|M_tf-f\|_\infty=0$ for all $f\in C_b(\R_+).$
However the restriction of the semigroup to the invariant subspace of bounded and uniformly continuous functions is strongly continuous.
\end{remark}

\section{Existence and uniqueness of a measure solution}

We are now ready to prove the existence and uniqueness of a measure solution to Equation~\eqref{eq:renewal}.
We define the dual semigroup $(M_t)_{t\geq0}$ on $\M(\R_+)=C_0(\R_+)'$ by
\[\mu M_t:f\in C_0(\R_+)\mapsto\mu(M_tf)=\int_{\R_+}M_tf\,\d\mu.\]
In other words we have by definition
\[\forall f\in C_0(\R_+),\qquad(\mu M_t)f=\mu(M_tf),\qquad{\it i.e.}\quad \int_{\R_+}f\,\d(\mu M_t)=\int_{\R_+}M_tf\,\d\mu.\]
The following lemma ensures that this identity is also satsified for $f\in C_b(\R_+).$
From now on we will denote without ambiguity the quantity $(\mu M_t)f=\mu(M_tf)$ by $\mu M_tf.$

\begin{lemma}
For all $f\in C_b(\R_+)$ we have $(\mu M_t)f=\mu(M_tf).$
\end{lemma}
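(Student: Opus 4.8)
The plan is to reduce the identity on $C_b(\R_+)$ to the already-known one on $C_0(\R_+)$ by a truncation-and-limit argument. Since $\mu M_t$ is by construction an element of $\M(\R_+)=C_0(\R_+)',$ the characterization of $C_0(\R_+)'$ recalled after the Riesz representation theorem gives, for every $f\in C_b(\R_+),$ the identity $(\mu M_t)f=\lim_{n\to\infty}(\mu M_t)(f_n),$ where $f_n$ is the truncation defined in~\eqref{eq:f_n}. Each $f_n$ is continuous with compact support, hence belongs to $C_0(\R_+),$ so the defining relation for $\mu M_t$ on $C_0(\R_+)$ applies and yields $(\mu M_t)(f_n)=\mu(M_tf_n).$ It then remains only to show that $\mu(M_tf_n)\to\mu(M_tf)$ as $n\to\infty.$

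The key point---and the only genuinely substantial one---is the pointwise convergence $M_tf_n(a)\to M_tf(a)$ for every fixed $a\geq0,$ which I expect to follow immediately from the finite-propagation property of the dual semigroup. By definition $f_n$ coincides with $f$ on $[0,n],$ so Lemma~\ref{lm:finite_prop}, applied with initial data $f_n$ and $f$ (which agree on $[0,A]$ with $A=n$), gives $M_tf_n(a)=M_tf(a)$ for all $t\in[0,T]$ and $a\in[0,n-T],$ for any $T\in(0,n).$ Fixing $(t,a)$ and taking for instance $T=t+1,$ this shows $M_tf_n(a)=M_tf(a)$ as soon as $n\geq a+t+1.$ Thus for each $a$ the sequence $M_tf_n(a)$ is eventually constant, and in particular converges to $M_tf(a).$ Intuitively, starting from age $a$ the age variable can only reach values in $[0,a+t]$ by time $t,$ so $M_tf(a)$ does not see the modification of $f$ beyond $n$ once $n$ exceeds $a+t.$

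Finally I would conclude by dominated convergence. By the contraction property of Proposition~\ref{prop:rightMt} together with the elementary bound $\|f_n\|_\infty\leq\|f\|_\infty$ (the truncation multiplies $f$ by a factor in $[0,1]$), one has the uniform estimate $|M_tf_n(a)|\leq\|f\|_\infty$ for all $a\geq0,$ and the constant $\|f\|_\infty$ is integrable against the finite measure $|\mu|.$ Applying the dominated convergence theorem to the positive and negative parts $\mu_\pm$ of the Jordan decomposition then gives $\mu(M_tf_n)=\int_{\R_+}M_tf_n\,\d\mu\to\int_{\R_+}M_tf\,\d\mu=\mu(M_tf).$ Combining the three steps yields $(\mu M_t)f=\lim_{n\to\infty}\mu(M_tf_n)=\mu(M_tf),$ which is the desired identity.
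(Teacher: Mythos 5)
Your proof is correct and follows essentially the same route as the paper's: truncate $f$ to $f_n\in C_0(\R_+)$ as in~\eqref{eq:f_n}, pass to the limit in $(\mu M_t)f_n$ using the characterization of elements of $C_0(\R_+)'$, and pass to the limit in $\mu(M_tf_n)$ by dominated convergence, with Lemma~\ref{lm:finite_prop} supplying the pointwise convergence $M_tf_n(a)\to M_tf(a)$ and $\|f\|_\infty$ the dominating bound. The only cosmetic difference is that the paper derives the uniform bound $|M_tf_n(a)|\leq\|f\|_\infty$ from the positivity of $M_t$ rather than from the contraction property.
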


\begin{proof}
The identity is true by definition for any $f\in C_0(\R_+).$
Let $f\in C_b(\R_+)$ and $f_n$ as defined in~\eqref{eq:f_n}.
The sequence $(f_n)_{n\in\N}$ lies in $C_0(\R_+)$ so $(\mu M_t)f_n=\mu(M_tf_n)$ for all $n\in\N.$
By monotone convergence we clearly have $(\mu M_t)f_n\to(\mu M_t)f.$
We will prove that $\mu (M_tf_n)\to\mu (M_tf)$ by dominated convergence.
First by positivity of $M_t$ we have $|M_tf_n(a)|\leq M_t|f_n|(a)\leq M_t|f|(a)\leq\|f\|_\infty.$ 
Additionally since $f_n(a)=f(a)$ for all $a\in[0,n],$ we have from Lemma~\ref{lm:finite_prop} that for all $a\in[0,n-t],$ $M_tf_n(a)=M_tf(a),$
so for all $a\geq0$ we have $M_tf_n(a)\to M_tf(a).$
\end{proof}

\begin{proposition}
The left semigroup $(M_t)_{t\geq0}$ is a positive and conservative contraction, {\it i.e} for all $t\geq0$ we have
\[\mu\geq0\quad\implies\quad\mu M_t\geq0\quad \text{and}\quad \|\mu M_t\|_{TV}=\|\mu\|_{TV},\]
\[\forall\mu\in\M(\R_+),\qquad\|\mu M_t\|_{TV}\leq\|\mu\|_{TV}.\]
\end{proposition}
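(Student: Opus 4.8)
The plan is to derive all three properties of the left semigroup directly from the corresponding properties of the right semigroup $(M_t)_{t\geq0}$ on $C_b(\R_+)$ established in Proposition~\ref{prop:rightMt}, together with the duality identity $(\mu M_t)f=\mu(M_tf)$ which, thanks to the lemma just proved, is now available for every $f\in C_b(\R_+)$ and not only for $f\in C_0(\R_+)$.

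First I would prove positivity. A measure $\nu\in\M(\R_+)$ is nonnegative if and only if $\nu f\geq0$ for every nonnegative $f\in C_0(\R_+)$. So let $\mu\geq0$ and take $f\in C_0(\R_+)$ with $f\geq0$. Since $M_t$ is a positive operator on $C_b(\R_+)$, we have $M_tf\geq0$, and since $\mu\geq0$ we conclude $(\mu M_t)f=\mu(M_tf)\geq0$. Hence $\mu M_t\geq0$.

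Next, conservativeness on the positive cone. For a nonnegative measure $\nu$ one has $\|\nu\|_{TV}=\nu(\R_+)=\nu\mathbf1$. Given $\mu\geq0$, the previous step yields $\mu M_t\geq0$, so $\|\mu M_t\|_{TV}=(\mu M_t)\mathbf1$. Using the duality identity with the test function $\mathbf1\in C_b(\R_+)$ and the conservativeness $M_t\mathbf1=\mathbf1$ from Proposition~\ref{prop:rightMt}, we get $(\mu M_t)\mathbf1=\mu(M_t\mathbf1)=\mu\mathbf1=\|\mu\|_{TV}$. Here lies the only point requiring care: the constant function $\mathbf1$ does not belong to $C_0(\R_+)$, so the identity $(\mu M_t)\mathbf1=\mu(M_t\mathbf1)$ is not available from the mere definition of the dual semigroup on $C_0(\R_+)'$; it is precisely the content of the lemma just established that licenses its use on $C_b(\R_+)$.

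Finally the contraction estimate for a general signed measure admits two equally short routes. The most direct one uses the Riesz isometry $\|\nu\|_{TV}=\sup_{\|f\|_\infty\leq1,\,f\in C_0(\R_+)}|\nu f|$: for any such $f$ one has $|(\mu M_t)f|=|\mu(M_tf)|\leq\|\mu\|_{TV}\|M_tf\|_\infty\leq\|\mu\|_{TV}\|f\|_\infty\leq\|\mu\|_{TV}$, the middle inequality being the contraction $\|M_tf\|_\infty\leq\|f\|_\infty$; taking the supremum gives $\|\mu M_t\|_{TV}\leq\|\mu\|_{TV}$. Alternatively one may invoke the Jordan decomposition $\mu=\mu_+-\mu_-$: both $\mu_\pm M_t$ are nonnegative by the first step, so the triangle inequality combined with the conservativeness just proved gives $\|\mu M_t\|_{TV}\leq\|\mu_+M_t\|_{TV}+\|\mu_-M_t\|_{TV}=\|\mu_+\|_{TV}+\|\mu_-\|_{TV}=\|\mu\|_{TV}$. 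Either way the argument is pure bookkeeping; no genuine obstacle remains once the duality identity on $C_b(\R_+)$ is in hand.
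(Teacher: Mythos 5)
Your proof is correct and follows essentially the same route as the paper: positivity and conservativeness via the duality identity $(\mu M_t)f=\mu(M_tf)$ applied to $f\geq0$ and to $f=\mathbf 1$, and the contraction via the dual characterization of the total variation norm together with $\|M_tf\|_\infty\leq\|f\|_\infty$. Your explicit remark that the test function $\mathbf 1$ lies in $C_b(\R_+)\setminus C_0(\R_+)$, so that the preceding lemma is genuinely needed there, is a point the paper leaves implicit.
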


\begin{proof}
It is a consequence of Proposition~\ref{prop:rightMt}.
If $\mu\geq0,$ then for any $f\geq0$ we have $(\mu M_t) f=\mu(M_t f)\geq0.$
Additionnally $\|\mu M_t\|_{TV}=(\mu M_t)\mathbf1=\mu(M_t\mathbf1)=\mu\mathbf1=\|\mu\|_{TV}.$
For $\mu\in\M(\R_+)$ not necessarily positive, we have
\[\|\mu M_t\|_{TV}=\sup_{\|f\|_\infty\leq1}|\mu M_tf|\leq\sup_{\|g\|_\infty\leq1}|\mu g|=\|\mu\|_{TV}.\]
\end{proof}

\begin{remark}
The left semigroup is not strongly continuous, {\it i.e.} we do not have $\lim_{t\to0}\|\mu M_t-\mu\|_{TV}=0$ for all $\mu\in\M(\R_+).$
This is due to the non continuity of the transport semigroup for the total variation distance: for instance for $a\in\R$ we have $\|\delta_{a+t}-\delta_a\|_{TV}=2$ for any $t>0.$
But the left semigroup is weak* continuous.
This is an immediate consequence of the strong continuity of the right semigroup on the space of bounded and uniformly continuous functions (which contains $C_0(\R_+)$).
The left semigroup is even narrowly continuous as we will see in the proof of the next theorem.
\end{remark}

\begin{theorem}
For any $\mu^{\rm in}\in\M(\R_+),$ the orbit map
$t\mapsto\mu^{\rm in}M_t$
is the unique measure solution to Equation~\eqref{eq:renewal}.
\end{theorem}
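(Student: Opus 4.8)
The plan is to treat existence and uniqueness separately, both built on the pairing $\mu M_tf=\mu(M_tf)$ and on the semigroup properties of $(M_t)_{t\ge0}$ established above. For existence I would simply check that $\mu_t:=\mu^{\rm in}M_t$ meets Definition~\ref{def:meassol}. Narrow continuity of $t\mapsto\mu^{\rm in}M_t$ follows from dominated convergence: for $f\in C_b(\R_+)$ the map $t\mapsto M_tf(a)=f(t,a)$ is continuous for each fixed $a$ by Theorem~\ref{th:dual} (which gives $f\in C_b(\R_+\times\R_+)$) and is bounded by $\|f\|_\infty$, so $t\mapsto\int_{\R_+}M_tf\,\d\mu^{\rm in}=\mu^{\rm in}M_tf$ is continuous against the finite measure $|\mu^{\rm in}|$. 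To obtain the differential form~\eqref{eq:meassol2}, which is equivalent to~\eqref{eq:meassol} by Proposition~\ref{prop:sol_prop}, I would fix $f\in C^1_b(\R_+)$ and differentiate the pairing: writing the difference quotient of $M_tf$ as $\tfrac1h\int_t^{t+h}M_s\A f\,\d s$, which converges pointwise to $M_t\A f$ and is bounded by $\|\A f\|_\infty$, dominated convergence against $\mu^{\rm in}$ together with $\p_tM_tf=M_t\A f$ (Proposition~\ref{prop:rightMt}) gives $\tfrac{\d}{\d t}(\mu^{\rm in}M_tf)=\mu^{\rm in}(M_t\A f)=\mu_t\A f$, while $\mu_0=\mu^{\rm in}$ since $M_0=\mathrm{Id}$.

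For uniqueness, let $(\mu_t)_{t\ge0}$ be any solution and fix $T>0$ and a test function $g$. The duality idea is that the backward object $t\mapsto\mu_t(M_{T-t}g)$ is constant on $[0,T]$: formally its derivative is $\mu_t\A(M_{T-t}g)$ coming from the evolution of $\mu_t$ (Proposition~\ref{prop:sol_prop}) minus $\mu_t\A(M_{T-t}g)$ coming from $\p_tM_{T-t}g=-\A M_{T-t}g$ (Proposition~\ref{prop:rightMt}), and the two cancel. Evaluating at $t=0$ and $t=T$ then yields $\mu_Tg=\mu^{\rm in}(M_Tg)=\mu^{\rm in}M_Tg$. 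Since $\mu_T$ and $\mu^{\rm in}M_T$ both lie in $\M(\R_+)=C_0(\R_+)'$ and agree on a dense subspace of $C_0(\R_+)$, they coincide, which is exactly $\mu_T=\mu^{\rm in}M_T$.

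The delicate point, and the main obstacle, is to justify that $t\mapsto\mu_t(M_{T-t}g)$ is genuinely differentiable (or directly constant), since both the measure $\mu_t$ and the test function $M_{T-t}g$ vary with $t$. Splitting the increment isolates one term with a \emph{fixed} test function, handled cleanly by Proposition~\ref{prop:sol_prop}, and one term of the form $\mu_{t+h}(\xi_h)$ with $\xi_h=(M_{T-t-h}g-M_{T-t}g)/h$ converging to $-\A M_{T-t}g$ only pointwise and locally uniformly, not in $\|\cdot\|_\infty$; narrow continuity by itself does not allow one to pass to the limit in such a term. I would resolve this in two steps. First, narrow continuity makes $\{\mu_t:t\in[0,T]\}$ pointwise bounded on the Banach space $C_b(\R_+)$, so by the Banach--Steinhaus theorem together with the identity $\|\mu\|_{(C_b(\R_+))'}=\|\mu\|_{TV}$ one gets a uniform bound $\sup_{t\in[0,T]}\|\mu_t\|_{TV}=:C_T<\infty$.

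Second, I would run the argument with smoother test functions $g\in C^2_c(\R_+)$, so that $\A g,\A^2 g\in C_b(\R_+)$ and, by $\p_\sigma M_\sigma\A g=M_\sigma\A^2g$ (Proposition~\ref{prop:rightMt} applied to $\A g$), the curve $\sigma\mapsto M_\sigma\A g$ is Lipschitz with values controlled by $\|\A^2g\|_\infty$. Combined with the bound $C_T$, this gives quantitative control of every remainder term, and a telescoping Riemann-sum estimate over a partition of $[0,T]$ shows directly that $\mu_Tg-\mu^{\rm in}M_Tg=0$. Finally, density of $C^2_c(\R_+)$ in $C_0(\R_+)$ upgrades this scalar identity, valid for all such $g$, to the measure equality $\mu_T=\mu^{\rm in}M_T$; since $T>0$ is arbitrary, any solution coincides with the orbit map, establishing uniqueness.
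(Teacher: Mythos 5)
Your existence argument is correct and is essentially the paper's: narrow continuity plus $\partial_tM_tf=M_t\A f$ paired against $\mu^{\rm in}$ (the paper verifies \eqref{eq:meassol} directly via Fubini's theorem rather than passing through \eqref{eq:meassol2}, but that is cosmetic). For uniqueness you take a genuinely different route. The paper never differentiates $t\mapsto\mu_t(M_{T-t}g)$; it works with the space-time formulation of Proposition~\ref{prop:weaksol} and, for each source $\psi\in C^1_c(\R_+\times\R_+)$, solves the \emph{inhomogeneous} backward dual equation \eqref{eq:dual_inh} with zero terminal data by the same fixed-point scheme as in Theorem~\ref{th:dual}. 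That choice is precisely what sidesteps the differentiability-in-$t$ obstruction you rightly identify: Proposition~\ref{prop:weaksol} only asks for a test function $\varphi\in C^1_b(\R_+\times\R_+)$ compactly supported in time, so no limit of difference quotients of $M_{T-t}g$ against a moving measure is ever taken.

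The gap is in your repair of the difference-quotient step. You invoke $\partial_\sigma M_\sigma\A g=M_\sigma\A^2g$ ``by Proposition~\ref{prop:rightMt} applied to $\A g$'', but that statement requires $\A g\in C^1_b(\R_+)$, and $\A g=g'+\beta\,(g(0)-g)$ is $C^1$ only if $\beta$ is $C^1$. The paper assumes $\beta$ merely continuous and bounded, so $\A^2g$ is not even defined, and no amount of extra smoothness of $g$ can compensate. More structurally, the sup-norm Lipschitz (or even sup-norm) continuity of $\sigma\mapsto M_\sigma\A g$ that your telescoping estimate needs is exactly the strong continuity of the right semigroup on $C_b(\R_+)$, which the Remark after Proposition~\ref{prop:rightMt} points out fails in general; and $\A g$ need not be uniformly continuous when $g(0)\neq0$, since $\beta$ need not be. Without such control, the term $\mu_{t+h}(\xi_h)$ is handled only through pointwise convergence of $\xi_h$, and your uniform bound $\sup_{t\le T}\|\mu_t\|_{TV}<\infty$ does not suffice: mass of $\mu_{t+h}$ escaping to infinity can pair with the non-uniformly-vanishing part of $\xi_h+\A M_{T-t}g$, unless you additionally prove tightness of $\{\mu_t\}_{t\in[0,T]}$. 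The cleanest fix is to abandon the constancy-of-$\mu_t(M_{T-t}g)$ device and argue as the paper does, via Proposition~\ref{prop:weaksol} and the solvability of \eqref{eq:dual_inh}.
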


\begin{proof}\emph{Existence.}
In this part we use Proposition~\ref{prop:rightMt} at different places.
Let $\mu\in\M(\R_+).$
We start by checking that $t\mapsto\mu M_t$ is narrowly continuous.
Let $f\in C_b(\R_+).$
Due to the semigroup property, it is sufficient to check that $\lim_{t\to0}\mu M_tf=\mu f.$
But from~\eqref{eq:Duhamel1} we have
\[|\mu M_tf-\mu f|\leq\underbrace{\bigg|\mu f-\int_{\R_+}f(a+t)\e^{-\int_0^t\beta(a+u)\d u}\d\mu(a)\bigg|}_{\xrightarrow[t\to0]{}\,0\ \text{by dominated convergence}}
+\underbrace{\bigg|\mu\int_0^t\e^{-\int_0^\tau\beta(\cdot+u)\d u}\beta(\cdot+\tau)M_{t-\tau}f(0)\,\d\tau\bigg|}_{\leq\|\mu\|_{TV}\|\beta\|_\infty\|f\|_\infty t\,\xrightarrow[t\to0]{}\,0}.\]

Now we check that $(\mu M_t)_{t\geq0}$ satisfies~\eqref{eq:meassol}.
Let $f\in C^1_c(\R_+).$
Using the identity $\partial_tM_tf=M_t\A f$ and the Fubini's theorem we have
\[(\mu M_t) f=\mu(M_tf)=\mu f+\mu\int_0^t\p_s M_sf\,\d s=\mu f+\mu\int_0^t M_s\A f\,\d s=\mu f+\int_0^t(\mu M_s)\A f\,\d s.\]

\medskip

\emph{Uniqueness.}
Because of Proposition~\eqref{prop:weaksol} we can prove uniqueness on formulation~\eqref{eq:weaksol}.
By linearity we can assume that $\mu^{\rm in}=0$
and we want to prove that the unique family $(\mu_t)_{t\geq0}$ which satisfies
\[\int_0^\infty\!\int_{\R_+}\big[\partial_t\varphi(t,a)+\partial_a\varphi(t,a)-\beta(a)\varphi(t,a)+\beta(a)\varphi(t,0)\big]\d\mu_t(a)\,\d t=0\]
for all $\varphi\in C^1_b(\R_+\times\R_+)$ with compact support in time, is the trivial family.
If we can prove that for all $\psi\in C^1_c(\R_+\times\R_+)$ there exists $\varphi\in C^1_b(\R_+\times\R_+)$ compactly supported in time such that
for all $t,a\geq0$
\beq\label{eq:dual_inh}
\partial_t\varphi(t,a)+\partial_a\varphi(t,a)-\beta(a)\varphi(t,a)+\beta(a)\varphi(t,0)=\psi(t,a),
\eeq
then we get the conclusion.
Let $\psi\in C^1_c(\R_+\times\R_+)$ and let $T>0$ such that $\supp\psi\subset[0,T)\times\R_+.$
Using the same method as for~\eqref{eq:dual}, we can prove the existence of a solution $\varphi\in C^1_b([0,T]\times\R_+)$ to~\eqref{eq:dual_inh}
with terminal condition $\varphi(T,a)=0.$
Since $\psi\in C^1_c([0,T)\times\R_+)$ we easily check that the extension of $\varphi(t,a)$ by $0$ for $t>T$ belongs to $C^1_b(\R_+\times\R_+),$ is compactly supported in time, and satisfies~\eqref{eq:dual_inh}.

\end{proof}

\section{Exponential convergence to the invariant measure}
\label{sec:asymptotic}

As noticed first by Sharpe and Lotka in~\cite{SharpeLotka}
the asymptotic behavior of the renewal equation consists in a convergence to a stationary distribution.
This property has then been proved by many authors using various methods for $L^1$ solutions
~\cite{Feller,Greiner,KhaladiArino,GwiazdaPerthame,PakdamanPerthameSalort10,PakdamanPerthameSalort13,Webb84}.
The generalized entropy method has even been extended to measure solutions in~\cite{GwiazdaWiedemann}.
Here we use a different approach which is based on a coupling argument.
More precisely we prove a so-called Doeblin's condition (see~\cite{MeynTweedie} for instance) which guarantees exponential convergence of any measure solution to the stationnary distribution.
Denote by $\P(\R_+)$ the set of probability measures, namely the positive measures with mass 1.

\begin{theorem}
Let $(M_t)_{t\geq0}$ be a conservative semigroup on $\M(\R_+)$ which satisfies the Doeblin's condition
\[\exists\, t_0>0,\, c\in(0,1),\, \nu\in\P(\R_+)\quad\text{such that}\quad \forall \mu\in\P(\R_+),\quad \mu M_{t_0}\geq c\,\nu.\]
Then for $\alpha:=\frac{-\log(1-c)}{t_0}>0$ we have for all $\mu_1,\mu_2\in\M(\R_+)$ such that $\mu_1(\R_+)=\mu_2(\R_+)$ and all $t\geq0$
\[\|\mu_1M_t-\mu_2M_t\|_{TV}\leq \e^{-\alpha(t-t_0)}\|\mu_1-\mu_2\|_{TV}.\]
\end{theorem}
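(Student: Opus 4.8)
The plan is to reduce everything to a single-step contraction in total variation and then iterate. By linearity and the hypothesis $\mu_1(\R_+)=\mu_2(\R_+)$, it suffices to set $\mu:=\mu_1-\mu_2$, which is a signed measure with $\mu(\R_+)=0$, and to establish the two facts
\[\|\mu M_{t_0}\|_{TV}\leq(1-c)\|\mu\|_{TV}\qquad\text{and}\qquad \|\mu M_t\|_{TV}\leq\|\mu\|_{TV}\ \text{for all }t\geq0,\]
the second being exactly the contraction property already established for the left semigroup. The exponent is tuned so that $\e^{-\alpha t_0}=1-c$, which is why $(1-c)$ is the natural one-step rate.

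The heart of the argument is the one-step estimate. First I would write the Jordan decomposition $\mu=\mu_+-\mu_-$; since $\mu(\R_+)=0$ the positive and negative parts share the same mass $m:=\mu_+(\R_+)=\mu_-(\R_+)$, and $\|\mu\|_{TV}=2m$. If $m=0$ then $\mu=0$ and there is nothing to prove, so assume $m>0$. Then $\tfrac1m\mu_+$ and $\tfrac1m\mu_-$ lie in $\P(\R_+)$, and the Doeblin condition gives $\mu_\pm M_{t_0}\geq cm\,\nu$. The key algebraic step is then to write
\[\mu M_{t_0}=\big(\mu_+M_{t_0}-cm\,\nu\big)-\big(\mu_-M_{t_0}-cm\,\nu\big),\]
a difference of two \emph{nonnegative} measures (by positivity of $M_{t_0}$). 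Although this need not be the Jordan decomposition of $\mu M_{t_0}$, the triangle inequality still bounds $\|\mu M_{t_0}\|_{TV}$ by the sum of the masses of these two nonnegative measures. Conservativeness of $M_{t_0}$ gives $(\mu_\pm M_{t_0})(\R_+)=m$, while $\nu(\R_+)=1$, so each term has mass $m-cm=m(1-c)$, whence $\|\mu M_{t_0}\|_{TV}\leq 2m(1-c)=(1-c)\|\mu\|_{TV}$.

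To pass from $t_0$ to a general time $t$, I would write $t=kt_0+r$ with $k=\lfloor t/t_0\rfloor\in\N$ and $r\in[0,t_0)$. Using the semigroup property $\mu M_t=(\mu M_{kt_0})M_r$, the iterated one-step estimate, and the contraction property for the leftover time $r$, one obtains
\[\|\mu M_t\|_{TV}\leq\|\mu M_{kt_0}\|_{TV}\leq(1-c)^k\|\mu\|_{TV}=\e^{-\alpha kt_0}\|\mu\|_{TV}.\]
Finally $k t_0\geq t-t_0$ by definition of the floor, together with $\alpha>0$, yields $\e^{-\alpha kt_0}\leq\e^{-\alpha(t-t_0)}$, which is the claimed bound.

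I expect the main obstacle to be the one-step estimate, and more precisely the observation that representing $\mu M_{t_0}$ as the difference of the two nonnegative measures $\mu_\pm M_{t_0}-cm\,\nu$ is both legitimate and sufficient: the total variation of a difference of nonnegative measures is controlled by the sum of their masses even when the decomposition is not minimal, and conservativeness pins down those masses exactly. The equal-mass hypothesis $\mu_1(\R_+)=\mu_2(\R_+)$ is precisely what makes $\tfrac1m\mu_+$ and $\tfrac1m\mu_-$ probability measures simultaneously, so that the single measure $\nu$ can be subtracted from both and cancel in the difference; without it the argument breaks down.
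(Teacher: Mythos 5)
Your proposal is correct and follows essentially the same route as the paper: Jordan-decompose the mass-zero difference, normalize its positive and negative parts into probability measures, subtract $c\,\nu$ from each image under $M_{t_0}$, use conservativeness to compute the leftover masses and the triangle inequality to get the one-step factor $(1-c)$, then iterate with the floor of $t/t_0$. The only differences are cosmetic (you normalize by the common mass $m$ rather than by $2/\|\mu_1-\mu_2\|_{TV}$, and you apply the leftover-time contraction at the end rather than at the start of the iteration).
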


\begin{proof}
Let $\mu_1,\mu_2\in\M(\R_+)$ such that $\mu_1(\R_+)=\mu_2(\R_+),$
so that $(\mu_1-\mu_2)_+(\R_+)=(\mu_2-\mu_1)_+(\R_+)=\frac12\|\mu_1-\mu_2\|_{TV}.$
The measure $\mu$ defined by
\[\mu:=\frac{2}{\|\mu_1-\mu_2\|_{TV}}(\mu_1-\mu_2).\]
thus satifies $\mu_+,\mu_-\in\P(\R_+)$ 
and the Doeblin's condition ensures that
\[\mu_\pm M_{t_0}\geq c\,\nu.\]
Using the conservativeness of $M_{t_0}$ we deduce that
\[\|\mu_\pm M_{t_0}-c\,\nu\|_{TV}=(\mu_\pm M_{t_0}-c\,\nu)(\R_+)=1-c\]
and then
\[\|\mu M_{t_0}\|_{TV}\leq\|\mu_+ M_{t_0}-c\,\nu\|_{TV}+\|\mu_- M_{t_0}-c\,\nu\|_{TV}=2(1-c)\]
which gives
\[\|\mu_1M_{t_0}-\mu_2M_{t_0}\|_{TV}=\frac12\|\mu_1-\mu_2\|_{TV}\|\mu M_{t_0}\|_{TV}\leq(1-c)\|\mu_1-\mu_2\|_{TV}.\]
Now for $t\geq0$ we define $n=\big\lfloor\frac{t}{t_0}\big\rfloor$ and we get by induction
\[\|\mu_1 M_t-\mu_2 M_t\|_{TV}\leq(1-c)^n\|\mu_1 M_{t-nt_0}-\mu_2 M_{t-nt_0}\|_{TV}\leq \e^{n\log(1-c)}\|\mu_1-\mu_2\|_{TV}.\]
This ends the proof since
\[n\log(1-c)\leq\Big(\frac{t}{t_0}-1\Big)\log(1-c)=-\alpha(t-t_0).\]

\end{proof}

\begin{proposition}
The renewal semigroup $(M_t)_{t\geq0}$ satisfies the Doeblin's condition
with $t_0=a_*+\eta,$ $c=\eta\,\beta_{\min}\,\e^{-\beta_{\max}(a_*+\eta)},$
and $\nu$ the uniform probability measure on $[0,\eta],$ for any choice of $\eta>0.$
\end{proposition}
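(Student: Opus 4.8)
The plan is to convert the Doeblin condition, which is an inequality between measures, into a single uniform pointwise lower bound for the dual semigroup, and then to read off the announced constant from the Duhamel formula \eqref{eq:Duhamel1}. Fix $t_0=a_*+\eta$ and let $\nu$ be the uniform probability on $[0,\eta]$, so that $\nu f_0=\tfrac1\eta\int_0^\eta f_0(s)\,\d s$. Since a positive measure dominates $c\,\nu$ as soon as it integrates every nonnegative continuous function to at least $c\,\nu f_0$, and since for $\mu\in\P(\R_+)$ we have $\mu M_{t_0}f_0=\int_{\R_+}M_{t_0}f_0(a)\,\d\mu(a)$ with $\mu(\R_+)=1$, it suffices to prove the uniform estimate
\[\forall a\geq0,\ \forall f_0\in C_b(\R_+),\ f_0\geq0,\qquad M_{t_0}f_0(a)\geq c\,\nu f_0,\qquad c=\eta\,\beta_{\min}\,\e^{-\beta_{\max}(a_*+\eta)}.\]
Integrating this constant lower bound against the probability measure $\mu$ then yields $\mu M_{t_0}f_0\geq c\,\nu f_0$ for every nonnegative test function, hence $\mu M_{t_0}\geq c\,\nu$.

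To obtain the pointwise bound I would write $M_{t_0}f_0(a)=f(t_0,a)$ for the solution $f$ of \eqref{eq:dual} with datum $f_0$, and expand it via \eqref{eq:Duhamel1}. By Theorem~\ref{th:dual} the solution is nonnegative, in particular $f(s,0)=M_sf_0(0)\geq0$, so the no-jump term is nonnegative and may be dropped, leaving only the renewal integral
\[M_{t_0}f_0(a)\geq\int_0^{t_0}\e^{-\int_0^\tau\beta(a+u)\d u}\,\beta(a+\tau)\,f(t_0-\tau,0)\,\d\tau.\]
Applying \eqref{eq:Duhamel1} once more to $f(t_0-\tau,0)$ (base age $0$, elapsed time $t_0-\tau$) and again discarding its nonnegative integral part retains the single survival term $f(t_0-\tau,0)\geq f_0(t_0-\tau)\,\e^{-\int_0^{t_0-\tau}\beta(u)\d u}$. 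This corresponds exactly to the event that the cell starting at age $a$ jumps once to $0$ at time $\tau$ and then ages freely up to $t_0$, ending at age $t_0-\tau$.

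Next I would restrict the $\tau$-integration to $[a_*,t_0]$, which is licit since the integrand is nonnegative, and change variables $s=t_0-\tau\in[0,\eta]$ so that $f_0$ is evaluated precisely on $[0,\eta]$:
\[M_{t_0}f_0(a)\geq\int_0^{\eta}\e^{-\left(\int_0^{t_0-s}\beta(a+u)\d u+\int_0^s\beta(u)\d u\right)}\,\beta(a+t_0-s)\,f_0(s)\,\d s.\]
The crucial point is to estimate the two survival exponentials jointly: their combined exponent is at most $\beta_{\max}(t_0-s)+\beta_{\max}s=\beta_{\max}t_0=\beta_{\max}(a_*+\eta)$, so the product is at least $\e^{-\beta_{\max}(a_*+\eta)}$. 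For the rate factor, $a+t_0-s\geq t_0-s=a_*+(\eta-s)\geq a_*$ for $s\in[0,\eta]$, whence $\beta(a+t_0-s)\geq\beta_{\min}$. Together these give
\[M_{t_0}f_0(a)\geq\beta_{\min}\,\e^{-\beta_{\max}(a_*+\eta)}\int_0^\eta f_0(s)\,\d s=\eta\,\beta_{\min}\,\e^{-\beta_{\max}(a_*+\eta)}\,\nu f_0=c\,\nu f_0,\]
which is the required estimate. The only delicate step is this last bookkeeping: one must avoid bounding each survival factor separately by $\e^{-\beta_{\max}t_0}$, which would lose a spurious $\e^{-\beta_{\max}\eta}$ and miss the stated constant, and instead use that the two elapsed times $t_0-s$ and $s$ sum to exactly $t_0$. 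The rest is a routine verification that all dropped terms are nonnegative (Theorem~\ref{th:dual}) and that the domain restriction keeps $\beta\geq\beta_{\min}$.
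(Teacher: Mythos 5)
Your proof is correct and follows essentially the same route as the paper: reduce the Doeblin condition to the pointwise dual bound $M_{t_0}f_0(a)\geq c\,\nu f_0$, iterate the Duhamel formula \eqref{eq:Duhamel1} once, discard the nonnegative remainder, restrict the integration variable so that $f_0$ is evaluated on $[0,\eta]$ while the rate argument stays above $a_*$, and bound the product of the two survival exponentials by $\e^{-\beta_{\max}t_0}$ using that the elapsed times sum to $t_0$. The paper performs exactly this computation (with the change of variables $\tau\mapsto t-\tau$ done before rather than after the restriction), so no further comment is needed.
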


\begin{proof}
We prove the equivalent dual formulation of the Doeblin's condition:
\[\exists\, t_0>0,\, c\in(0,1),\, \nu\in\P(\R_+)\quad\text{such that}\quad \forall f\geq0,\ \forall a\geq0,\quad M_{t_0}f(a)\geq c\,(\nu f).\]
To do so we iterate once the Duhamel formula~\eqref{eq:Duhamel1} and we get for any $f\geq0$
\begin{align*}
f(t,a)&=f_0(t+a)\e^{-\int_0^t\beta(a+u)\d u}+\int_0^t\e^{-\int_0^\tau\beta(a+u)\d u}\beta(a+\tau)f(t-\tau,0)\,\d\tau\\
&=f_0(t+a)\e^{-\int_0^t\beta(a+u)\d u} +\int_0^t\e^{-\int_0^\tau\beta(a+u)\d u}\beta(a+\tau)f_0(t-\tau)\e^{-\int_0^{t-\tau}\beta(u)\d u}\,\d\tau
+(\geq0)\\
&\geq\int_0^t\e^{-\int_0^\tau\beta(a+u)\d u}\beta(a+\tau)f_0(t-\tau)\e^{-\int_0^{t-\tau}\beta(u)\d u}\,\d\tau\\
&=\int_0^t\e^{-\int_0^{t-\tau}\beta(a+u)\d u}\beta(a+t-\tau)f_0(\tau)\e^{-\int_0^{\tau}\beta(u)\d u}\,\d\tau.
\end{align*}
Consider the probability measure $\nu$ defined by $\nu f=\frac1\eta\int_0^\eta f(a)\,\d a$ for some $\eta>0.$
Then for any $t\geq a_*+\eta$ and any $a\geq0$ we have
\begin{align*}
f(t,a)&\geq\int_0^t\e^{-\int_0^{t-\tau}\beta(a+u)\d u}\beta(a+t-\tau)f_0(\tau)e^{-\int_0^{\tau}\beta(u)\d u}\,\d\tau\\
&\geq\int_0^\eta \e^{-\beta_{\max}(t-\tau)}\beta_{\min}f_0(\tau)\e^{-\beta_{\max}\tau}\,\d\tau\\
&=\eta\,\beta_{\min}\,\e^{-\beta_{\max}t}(\nu f_0).
\end{align*}
\end{proof}

As we have already seen in the introduction, the conservative equation~\eqref{eq:renewal} admits a unique invariant probability measure,
{\it i.e.} there exists a unique probability measure $\mu_\infty$ such that $\mu_\infty M_t=\mu_\infty$ for all $t\geq0.$
This probability measure has a density with respect to the Lebesgue measure
\[\d\mu_\infty=N(a)\,\d a\]
where $N$ is explicitly given by
\[N(a)=N(0)\,\e^{-\int_0^a\beta(u)\d u}\]
with $N(0)$ such that $\int_0^\infty N(a)\,\d a=1.$

\begin{corollary}
For all $\mu\in\M(\R_+)$ and all $\eta>0$ we have
\[\forall t\geq0,\qquad\|\mu M_t-(\mu\mathbf1)\mu_\infty\|_{TV}\leq \e^{-\alpha(t-t_0)}\|\mu-(\mu\mathbf1)\mu_\infty\|_{TV},\]
where $t_0=a_*+\eta,$ $c=\eta\,\beta_{\min}\,\e^{-\beta_{\max}(a_*+\eta)},$ and $\alpha=\frac{-\log(1-c)}{t_0}.$
\end{corollary}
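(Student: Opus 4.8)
The plan is to obtain this as an immediate consequence of the exponential-contraction theorem, applied to the renewal semigroup whose Doeblin's condition has just been established. First I would invoke the preceding proposition, which guarantees that $(M_t)_{t\geq0}$ satisfies the Doeblin's condition with the stated values $t_0=a_*+\eta$, $c=\eta\,\beta_{\min}\,\e^{-\beta_{\max}(a_*+\eta)}$, and hence $\alpha=\frac{-\log(1-c)}{t_0}$, for any fixed $\eta>0$. This makes the theorem applicable with exactly these constants.

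Next I would set $\mu_1=\mu$ and $\mu_2=(\mu\mathbf1)\mu_\infty$ and verify the mass-matching hypothesis of the theorem. Since $\mathbf1$ is the constant function equal to $1$, we have $\mu_1(\R_+)=\mu\mathbf1$; and because $\mu_\infty\in\P(\R_+)$ satisfies $\mu_\infty(\R_+)=1$, we get $\mu_2(\R_+)=(\mu\mathbf1)\,\mu_\infty(\R_+)=\mu\mathbf1$. Thus $\mu_1(\R_+)=\mu_2(\R_+)$, so the hypothesis is met even for signed $\mu$ (the scalar $\mu\mathbf1$ may be negative, which is allowed).

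The key algebraic observation is that $\mu_2$ is fixed by the flow: using the invariance $\mu_\infty M_t=\mu_\infty$ together with the linearity of the action $\nu\mapsto\nu M_t$, we obtain $\mu_2 M_t=(\mu\mathbf1)(\mu_\infty M_t)=(\mu\mathbf1)\mu_\infty=\mu_2$ for all $t\geq0$. Feeding $\mu_1,\mu_2$ into the conclusion of the theorem and replacing $\mu_2 M_t$ by $(\mu\mathbf1)\mu_\infty$ yields precisely the desired estimate
\[\|\mu M_t-(\mu\mathbf1)\mu_\infty\|_{TV}\leq\e^{-\alpha(t-t_0)}\|\mu-(\mu\mathbf1)\mu_\infty\|_{TV}.\]

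There is no genuine obstacle here: the corollary is essentially a repackaging of the theorem with the concrete Doeblin constants supplied by the proposition. The only points requiring a line of verification are the equal-mass condition and the invariance identity $\mu_2 M_t=\mu_2$; everything else is substitution. As a side remark, this same estimate re-proves the uniqueness of the invariant probability measure asserted just above, since any two such measures would be forced to coincide by letting $t\to\infty$.
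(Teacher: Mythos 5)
Your proof is correct and follows exactly the intended route: the paper states this corollary without proof, as an immediate application of the Doeblin contraction theorem with $\mu_1=\mu$ and $\mu_2=(\mu\mathbf1)\mu_\infty$, using the equal-mass condition and the invariance $\mu_\infty M_t=\mu_\infty$, which are precisely the two points you verify.
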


Notice that in the case $a_*=0$ we obtain by passing to the limit $\eta\to0$
\[\forall t\geq0,\qquad\|\mu M_t-(\mu\mathbf1)\mu_\infty\|_{TV}\leq \e^{-\beta_{\min}t}\,\|\mu-(\mu\mathbf1)\mu_\infty\|_{TV}.\]

\section*{Acknowledgments}
The author is very thankful to Vincent Bansaye and Bertrand Cloez for having initiated him to the Doeblin's method.
This work has been partially supported by the ANR project KIBORD, ANR-13-BS01-0004, funded by the French Ministry of Research.


\end{document}